\theoremstyle{plain}
\newtheorem{theo}{Theorem}[section]
\newtheorem*{theo*}{Theorem}
\newtheorem{prop}[theo]{Proposition}
\newtheorem{lem}[theo]{Lemma}
\newtheorem{defi}[theo]{Definition}
\newtheorem*{defi*}{Definition}
\theoremstyle{remark}
\newtheorem{rem}[theo]{Remark}
\newcommand{\N}{\mathbb{N}}
\newcommand{\R}{\mathbb{R}}
\newcommand{\Q}{\mathbb{Q}}
\newcommand{\E}{\mathbb{E}}
\renewcommand{\P}{\mathbb{P}}
\newcommand{\T}{\mathbb{T}}
\newcommand{\bt}{\mathbf{t}}
\newcommand{\ind}{\mathds{1}}
\title[Penalization of GW processes]{Penalization of Galton-Watson processes}
\keywords{Galton-Watson trees, penalization, conditioning}
\subjclass[2010]{60J80,60G42}
\date{\today}
\author{Romain Abraham} 
\author{Pierre Debs}
\address{
Institut Denis Poisson,
Universit\'e d'Orl\'eans,
Universit\'e de Tours,
CNRS,
Rue de Chartres,
B.P. 6759,
45067 Orl\'eans cedex 2,
France.
}
\email{romain.abraham@univ-orleans.fr} 
\email{pierre.debs@univ-orleans.fr}
\begin{document}

\begin{abstract}
We apply the penalization technique introduced by Roynette, Vallois, Yor for Brownian motion to Galton-Watson processes with a penalizing function of the form $P(x)s^x$ where $P$ is a polynomial of degree $p$ and $s\in[0,1]$. We prove that the limiting martingales obtained by this method are most of the time classical ones, except in the super-critical case for $s=1$ (or $s\to 1$) where we obtain new martingales. If we make a change of probability measure with this martingale, we obtain a multi-type Galton-Watson tree with $p$ distinguished infinite spines.
\end{abstract}

\maketitle

\section{Introduction}

Let $(Z_n)_{n\ge 0}$ be a Galton-Watson process (GW) associated with an offspring distribution $q=(q_n,n\in\N)$. We denote by $\mu$ the first moment of $q$ and recall that the process is said to be sub-critical (resp. critical, resp. super-critical) if $\mu<1$ (resp. $\mu=1$, resp. $\mu>1$) and that the process suffers a.s. extinction in the sub-critical and critial cases (unless the degenerate case $q_1=1$) whereas it has a positive probability $1-\kappa$ of survival in the super-critical case. Moreover, the constant $\kappa$ is the smallest positive fix point of the generating function $f$ of $q$. We refer to \cite{AN} for general results on GW processes.

It is easy to check that the two processes $(Z_n/\mu^n)_{n\ge 0}$ and $(\kappa^{Z_n-1})_{n\ge 0}$ are martingales with respect to the natural filtration $(\mathscr{F}_n)_{n\ge 0}$ associated with $(Z_n)_{n\ge 0}$, with mean 1. Moreover, given a martingale $(M_n)_{n\ge 0}$ with mean 1, we can define a new process $(\widetilde Z_n)_{n\ge 0}$ by a change of probability: for every nonnegative measurable functional $\varphi$, we have
\[
\E\left[\varphi(\widetilde Z_k,0\le k\le n)\right]=\E\left[M_n\varphi(Z_k,0\le k\le n)\right].
\]
The distribution of the process $\widetilde Z$ and of its genealogical tree is well-known for the two previous martingales. In the sub-critical or critical case, the process associated with the martingale $(Z_n/\mu^n)_{n\ge 0}$ is the so-called sized-biased GW and is a two-type GW. It can also be viewed as a version of the process conditioned on non-extinction, see \cite{Ke}. The associated genealogical tree is composed of an infinite spine on which are grafted trees distributed as the original one. In the super-critical case, if $\kappa\ne 0$, the process associated with the martingale $(\kappa^{Z_n-1})_{n\ge 0}$ is the original GW conditioned on extinction. It is a sub-critical GW with generating function $\tilde f(\cdot)=f(\kappa\,\cdot)/\kappa$ and mean $\tilde\mu=f'(\kappa)$. By combining these two results, we get a third martingale namely
\begin{equation}\label{eq:martingale}
M_n^{(1)}=\frac{Z_n\kappa^{Z_n-1}}{f'(\kappa)^n}
\end{equation}
and the associated process $\widetilde Z$ is distributed, if $0<\kappa<1$, as the size-biased process of the GW conditioned on extinction.

A general method called {\sl penalization} has been introduced by Roynette, Vallois and Yor \cite{RVY06a, RVY06b,RVY09} in the case of the one-dimensional Brownian motion to generate new martingales and to define, by a change of measure, Brownian-like processes conditioned on some specific zero-probability events. This method has also been used for similar problems applied to random walks, \cite{De09,De12}. It consists in our case in considering a function $\varphi(n,x)$ and studying the limit
\begin{equation}\label{eq:limit}
\lim_{m\to+\infty}\frac{\E\left[\ind_{\Lambda_n}\varphi(m+n,Z_{m+n})\right]}{\E[\varphi(m+n,Z_{m+n})]}
\end{equation}
with $\Lambda_n\in\mathscr{F}_n$. If this limit exists, it takes the form $\E[\ind_{\Lambda_n}M_n]$ where the process $(M_n)_{n\in\N}$ is a positive martingale with $M_0=1$ (see \cite{RY09} for more details). 

The study of conditioned GW goes back to the seminal work of Kesten \cite{Ke} and has recently received a renewed interest, see \cite{Ja,AD14a,AD14b}, mainly because of the possibility of getting other types of limiting trees than Kesten's. This work can also be viewed as part of this problem. For instance, penalizing by the martingale $Z_n/\mu^n$ prevents the process from extinction (this is the case considered in \cite{Ke}) whereas considering the weight $\kappa^{Z_n-1}$ penalizes the paths where the size of the population gets large.

In order to generalize the martingale \eqref{eq:martingale}, we first consider the function $\varphi(x)=H_p(x)s^x$ (that does not depend on $n$) for $0<s<1$ where $H_p$ denotes the $p$-th Hilbert's polynomial defined by
\begin{equation}\label{eq:Hilbert}
H_0(x)=1\mbox{ and }H_p(x)=\frac{1}{p!}\prod_{k=0}^{p-1}(x-k)\mbox{ for }p\ge 1.
\end{equation}
We prove that the limit \eqref{eq:limit} exists for every $s\in[0,1)$ but we always get already known limiting martingales. More precisely, see Theorems \ref{thm:s<1-q_0>0}, \ref{thm:s<1-q_0=0} and \ref{thm:lim-critique}, we have for every $p\in\N$, every $s\in[0,1)$, every $n\in\N$ and every $\Lambda_n\in\mathscr{F}_n$, 
\begin{itemize}
\item {\sl Critical and sub-critical case}.
\[
\lim_{m\to+\infty}\frac{\E\left[\ind_{\Lambda_n}H_p(Z_{m+n})s^{Z_{m+n}}\right]}{\E[H_p(Z_{m+n})s^{Z_{m+n}}]}=\begin{cases}
\E[\ind_{\Lambda_n}] &\mbox{if }p=0,\\
\E[Z_n/\mu^n\ind_{\Lambda_n}] & \mbox{if }p\ge 1.
\end{cases}
\]
This result also holds in the critical case for $s=1$.
\item {\sl Super-critical case}. We set $\mathfrak{a}=\min\{k\ge 0,\ q_k>0\}$. We have for every $p\ge 0$,
\[
\lim_{m\to+\infty}\frac{\E\left[\ind_{\Lambda_n}H_p(Z_{m+n})s^{Z_{m+n}}\right]}{\E[H_p(Z_{m+n})s^{Z_{m+n}}]}=\begin{cases}
\E\left[\kappa^{Z_n-1}\ind_{\Lambda_n}\right] & \mbox{if }p=0 \mbox{ and }\mathfrak{a}=0,\\
\E\left[\frac{Z_n\kappa^{Z_n-1}}{f'(\kappa)^n}\ind_{\Lambda_n}\right] &\mbox{if }p\ge 1\mbox{ and }\mathfrak{a}=0,\\
\E\left[\frac{1}{q_1^n}\ind_{Z_n=1}\ind_{\Lambda_n}\right] & \mbox{if }\mathfrak{a}=1,\\
\E\left[q_\mathfrak{a}^{-\frac{\mathfrak{a}^n-1}{\mathfrak{a}-1}}\ind_{Z_n=\mathfrak{a}^n}\ind_{\Lambda_n}\right] & \mbox{if }\mathfrak{a}\ge 2.
\end{cases}
\]
\end{itemize}

 Let us mention that the choice of the Hilbert's polynomials is only here to ease the computations but does not have any influence on the limit. Considering any polynomial of degree $p$ that vanishes at 0 leads to the same limit as for $H_p$.

A more interesting feature is to consider, in the super-critical case, $s=1$ or a sequence $s_n$ that tends to 1. It appears that the correct speed of convergence, in order to get non-trivial limits, leads to consider functions of the form
\[
\varphi_p(n,x)=H_p(x)e^{-ax/\mu^n}
\]
where $a\in\R_+$, see Theorem \ref{thm:limite_ratio}. We also describe the genealogical tree of $\widetilde Z$, see Theorem \ref{thm:distribution_p-spines}, which is the genealogical tree of a non-homogeneous multi-type GW (the offspring distribution of a node depends on its type and its generation). For the tree associated with the function $\varphi_p$, the types of the nodes run from 0 to $p$, the root being of type $p$. Moreover, the sum of the types of the offspring of one node is equal to the type of this node. Hence, nodes of type 0 give birth to nodes of type 0, nodes of type 1 give birth to one node of type 1 and nodes of type 0, nodes of type 2 give birth to either one node of type 2 or two nodes of type 1, and nodes of type 0, etc. For instance, the figure below gives some possible trees with a root of type 2 or 3. The type of the node is written in it, black nodes are of type 0.

\begin{center}
\begin{figure}[H]
\includegraphics[width=5cm]{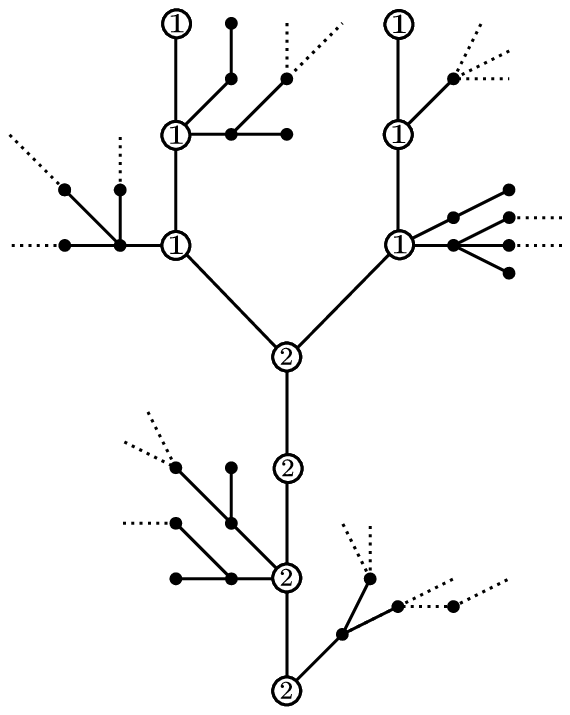}
\includegraphics[width=5cm]{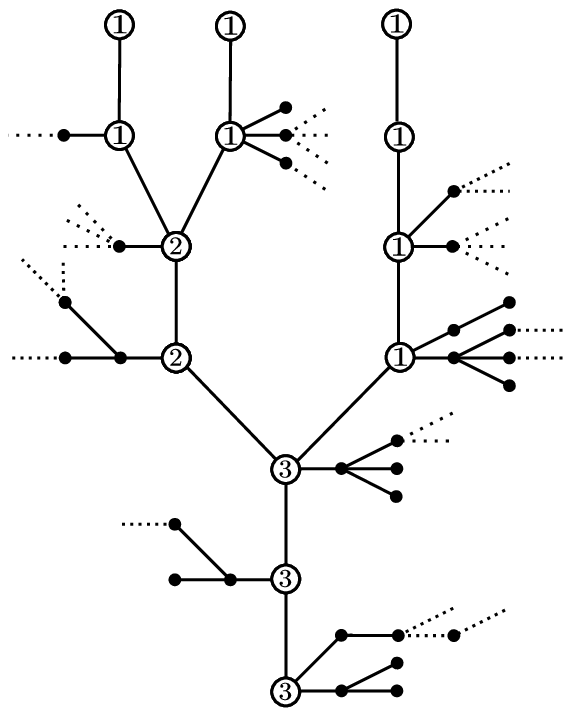}
\includegraphics[width=5cm]{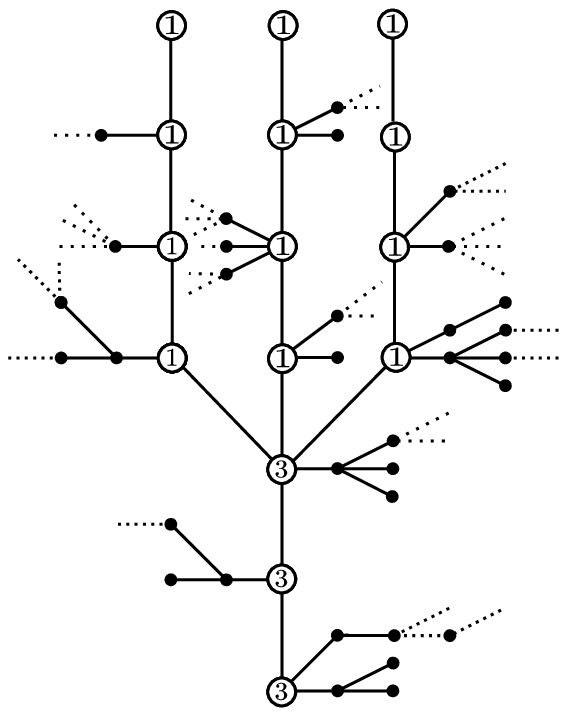}
\caption{Trees with a root of respective type 2, 3 and 3.}
\end{figure}
\end{center}

We see that, if the root is of type $p$, the tree exhibits a skeleton with $p$ infinite spines on which are grafted trees of type 0. The  distribution of such a tree is given in Definition \ref{def:biased-tree}. Let us mention that the trees of type 0 already appear in \cite{AD17} and may be infinite, the $p$-spines of the skeleton are not the only infinite spines of the tree. Multi-spine trees have already been considered, \cite{RSS,HR}, but they differ from those introduced here.

In the sub-critical case, if we suppose that there exists a second fix point $\kappa$ for the generating function $f$, the associated GW can be viewed as a super-critical GW conditioned on extinction and can be obtained from this super-critical GW by a standard change of measure. Then by combining the two changes of measure, the previous results can be used to get similar results in such a sub-critical case, see Theorem \ref{thm:sub-critical}.

The paper is organized as follows. In the second section, we introduce the formalism of discrete trees that we use in all the paper and define the distribution of Galton-Watson trees. In Section 3, we compute all the limits in the case $s\in[0,1)$. We then compute in Section 4 the limit in the super-critical case when $s_n\to 1$ and describe the distribution of the modified genealogical tree. We deduce the same kind of results in the sub-critical case in Section 4 and finish with an appendix that contains a technical lemma on Hilbert polynomials that we use in the proofs.

\section{Notations}

\subsection{The set of discrete trees}

Let $\mathscr{U}=\bigcup_{n=0}^{+\infty}(\N^*)^n$ be the set of finite sequences of positive integers with the convention $(\N^*)^0=\{\partial\}$. For every $u\in\mathscr{U}$, we set $|u|$ the length of $u$ i.e. the unique integer $n\in\N$ such that $u\in(\N^*)^n$. If $u$ and $v$ are two sequences of $\mathscr{U}$, we set $uv$ the concatenation of the two sequences with the convention $\partial u=u\partial=u$. For every $u\in\mathscr{U}\setminus\{\partial\}$, we define $\bar{u}$ the unique element of $\mathscr{U}$ such that $u=\bar{u}i$ for some $i\in\N^*$.

A tree $\bt$ rooted at $u\in\mathscr{U}$ is a subset of $\mathscr{U}$ that satisfies
\begin{itemize}
\item $u\in\bt$.
\item $\forall v\in \mathscr{U}$, $|v|<|u|\Longrightarrow v\not\in\bt$.
\item $\forall v\in\bt\setminus\{u\}$, $\bar v\in\bt$.
\item $\forall v\in\bt$, $\exists k_v(\bt)\in\N$, $\forall i\in\N^*$,\ $vi\in \bt\iff i\le k_v(\bt)$.
\end{itemize}

We denote by $\T_u$ the set of trees rooted at $u$ and by $\T=\bigcup_{u\in\mathscr{U}}\T_u$ the set of all trees.

For a tree $\bt\in\T$, we set $H(\bt)$ its height:
\[
H(\bt)=\max\{|u|,\ u\in\bt\},
\]
and we denote, for every $h\in\N^*$, by $\T^{(h)}$ (resp. $\T_u^{(h)}$) the subset of trees of $\T$ (resp. $\T_u$) with height less that $h$.

For every $n\in\N^*$, we denote by $\mathbf{1}_n=(1,\ldots,1)\in(\N^*)^n$, and we write for simplicity $\T_n$ (resp. $\T_n^{(h)}$) instead of $\T_{\mathbf{1}_n}$ (resp $\T^{(h)}_{\mathbf{1}_n}$).

For every $\bt\in\T$ and every $u\in \bt$, we set $\bt_u$ the subtree of $\bt$ rooted at $u$ i.e.
\[
\bt_u=\{v\in\bt,\ \exists w\in \mathscr{U},\ v=uw\}.
\]

For every $\bt\in\T$ and every $n\in\N$, we denote by $z_n(\bt)$ the number nodes of $\bt$ at height $n$:
\[
z_n(\bt)=\mbox{Card}(\{u\in\bt,\ |u|=n\}).
\]

For every $n\in\N^*$, we define on $\T$ the restriction operator $r_n$ by
\[
\forall \bt\in\T,\ r_n(\bt)=\{u\in\bt,\ |u|\le n\}.
\]
Classical results give that the distribution of a random tree $\tau$ on $\T$ is characterized by the family of probabilities $(\P(r_n(\tau)=\bt),\ n\in\N^*,\ \bt\in\T^{(n)})$.

\subsection{Galton-Watson trees}

Let $q=(q_n,\ n\in\N)$ be a probability distribution on the nonnegative integers. We set $\mu=\sum_{n=0}^{+\infty} nq_n$ its mean and always suppose that $\mu<+\infty$.

A $\T_\partial$-valued random tree $\tau$ is said to be Galton-Watson tree with offspring distribution $q$ under $\P$ if, for every $h\in\N^*$ and every $\bt\in\T_\partial^{(h)}$,
\[
\P(r_h(\tau)=\bt)=\prod_{u\in r_{h-1}(\bt)}q_{k_u(\bt)}.
\]

The generation-size process defined by $(Z_n=z_n(\tau),\ n\in\N)$ is the classical Galton-Watson process with offspring distribution $q$ starting with a single individual at time 0.

As we will later consider inhomogeneous Galton-Watson trees (whose offspring distribution depends on the height of the node), we define for every $k\in\N$ the distribution $\P_k$ under which the generation-size process is a Galton-Watson process starting with a single individual at time $k$:
\[
\forall h>k,\ \forall \bt\in\T_k^{(h)},\ \P_k(r_h(\tau)=\bt)=\prod_{u\in r_{h-1}(\bt)}q_{k_u(\bt)}.
\]
In other word, the random tree $\tau$ under $\P_k$ is distributed as $\mathbf{1}_k\tau$ under $\P$, and $\P$ is equal to $\P_\partial$.

Let $f$ denote the generating function of $q$ and for every $n\ge 1$, we set $f_n$ the $n$-fold iterate of $f$:
\[
f_1=f,\qquad \forall n\ge 1,\ f_{n+1}=f_n\circ f.
\]
Then $f_n$ is the generating function of the random variable $Z_n$ under $\P$.

We recall now the classical result on the extinction probability of the
Galton-Watson tree  and introduce some notations. We denote by
$\{H(\tau)<+\infty\}=\bigcup _{n\in \N} \{ Z_n=0\}$ the extinction
event and denote by $\kappa$ the extinction probability:
\begin{equation}\label{eq:def-kappa}
\kappa=\P(H(\tau)<+\infty).
\end{equation} 
Then, $\kappa$ is the
smallest non-negative root of $f(s)=s$. Moreover, we can prove that
\begin{equation}\label{eq:conv-kappa}
\forall s\in[0,1),\quad \lim_{n\to+\infty}f_n(s)=\kappa.
\end{equation}

We recall the  three following cases:
\begin{itemize}
\item The sub-critical case ($\mu<1$):  $\kappa=1$.
\item The critical case ($\mu=1$):  $\kappa=1$ (unless $q_1=1$ and then $\kappa=0$).
\item The super-critical case ($\mu>1$):  $\kappa\in [0, 1)$, the process has a
  positive probability  of non-extinction.
\end{itemize}
In the super-critical case, we recall that
\begin{equation}\label{eq:def-a}
\mathfrak{a}=\min\{k\ge 0,\ q_k>0\}
\end{equation}
and we say that we are in the Schroeder case if $\mathfrak{a}\le 1$ (which implies $f'(\kappa)>0$) and in the B\"otcher case if $\mathfrak{a}>1$ (in that case, we have $\kappa=f'(\kappa)=0$).

It is easy to check that the process $(Z_n/\mu^n,n\in\N)$ is a nonegative martingale under $\P$ and hence converges a.s. toward a random variable denoted by $W$.
Moreover, following \cite{Se}, we know that, in the super-critical case, if $q$ satisfies the so-called $L\log L$ condition i.e. $\E[Z_1\log Z_1]<+\infty$, then $W$ is non-degenerate and $\P(W=0)=\kappa$.

Let us denote by $\phi$ the Laplace transform of $W$. Then, $\phi$ is the unique (up to a linear change of variable) solution of Schroeder's equation (see \cite{Se}, Theorem 4.1):
\begin{equation}\label{eq:schroeder}
\forall a\ge 0,\quad f(\phi(a))=\phi(a\mu).
\end{equation}

\section{Standard limiting martingales}

In this section, we study the penalization function
\begin{equation}\label{eq:penal-s<1}
\varphi_p(x)=H_p(x)s^x
\end{equation}
for some fixed integer $p\in\N^*$ and some fixed $s\in[0,1)$ (or $s=1$ in the critical case).

\subsection{A formula for the conditional expectation}

Let $n,m$ be non-negative integers.
According to the branching property, conditionally on $\mathscr{F}_n$, we have
\[
Z_{n+m}\overset{(d)}{=}\sum_{j=1}^{Z_n}Z_m^{(j)}
\]
where the sequence $(Z^{(j)},j\ge 1)$ are i.i.d. copies of $Z$. Therefore we deduce that, for every $s\in[0,1)$, we have
\begin{equation}\label{eq:generatrice-cond}
\E[s^{Z_{m+n}}|\mathscr F_n]=\E\left[\prod_{j=1}^{Z_n}s^{Z_m^{(j)}}\biggm|\mathscr F_n\right]=\prod_{j=1}^{Z_n}\E\left[s^{Z_m^{(j)}}\right]=f_m(s)^{Z_n}.
\end{equation}

Let us denote by
\begin{equation}\label{eq:def-Sp}
S_{i,p}=\left\{(n_1,\ldots,n_i)\in(\N^*)^i,\ \sum_{k=1}^in_i=p\right\}.
\end{equation}
We have the following result:

\begin{lem}\label{lem:conditional}
Let $p\in\N^*$ and let $q$ be an offspring distribution with a finite $p$-th moment. For every $n,m\in\N$ and every $s\in[0,1)$, we have
\begin{equation}\label{eq:conditional}
\E\left[H_p(Z_{n+m})s^{Z_{n+m}-p}\Bigm|\mathscr{F}_n\right]=\sum_{i=1}^p H_i(Z_n)f_m(s)^{Z_n-i}\sum_{(n_1,\ldots,n_i)\in S_{i,p}}\prod_{j=1}^i\frac{f_m^{(n_j)}(s)}{n_j!}\cdot
\end{equation}
\end{lem}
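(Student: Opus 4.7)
The plan is to derive \eqref{eq:conditional} from a single Faà di Bruno computation applied to the generating-function identity \eqref{eq:generatrice-cond}. The starting observation is the elementary identity
\[
\frac{1}{p!}\,\frac{d^p}{ds^p}s^x \;=\; H_p(x)\,s^{x-p},
\]
so that applying $\frac{1}{p!}\partial_s^p$ to $\E[s^{Z_{n+m}}\mid\mathscr{F}_n]=f_m(s)^{Z_n}$ ought to produce both sides of the claimed formula at once. The first step is to justify the interchange of derivative and conditional expectation: for $s\in[0,1)$ the function $x\mapsto H_p(x)s^{x-p}$ is bounded on $\N$, and the same uniform bound holds for all difference quotients of order up to $p$ on a closed subinterval of $[0,1)$ containing $s$, so iterated dominated convergence gives
\[
\frac{1}{p!}\frac{d^p}{ds^p}\E\bigl[s^{Z_{n+m}}\bigm|\mathscr{F}_n\bigr] \;=\; \E\bigl[H_p(Z_{n+m})\,s^{Z_{n+m}-p}\bigm|\mathscr{F}_n\bigr].
\]

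The second step is to expand $\frac{1}{p!}\partial_s^p f_m(s)^{Z_n}$ via Faà di Bruno. Writing $h(y)=y^{Z_n}$ and $g(s)=f_m(s)$, one has $h^{(i)}(y)=i!\,H_i(Z_n)\,y^{Z_n-i}$, so, with $i:=m_1+\cdots+m_p$, Faà di Bruno's formula gives
\[
\frac{1}{p!}\frac{d^p}{ds^p}\,g(s)^{Z_n} \;=\; \sum_{\substack{(m_1,\ldots,m_p)\in\N^p\\ \sum_j jm_j=p}} \frac{i!}{m_1!\cdots m_p!}\,H_i(Z_n)\,g(s)^{Z_n-i}\prod_{j=1}^{p}\left(\frac{g^{(j)}(s)}{j!}\right)^{m_j},
\]
where the trivial multi-index $(0,\ldots,0)$ does not contribute since $p\ge 1$ forces $i\ge 1$.

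The last step is a purely combinatorial translation from multi-indices to ordered tuples. Grouping by $i\in\{1,\ldots,p\}$ and invoking the multinomial identity (the number of ordered tuples $(n_1,\ldots,n_i)\in S_{i,p}$ whose value-profile is the multi-index $(m_1,\ldots,m_p)$, meaning $m_j=\#\{k:n_k=j\}$, equals $i!/(m_1!\cdots m_p!)$), one obtains
\[
\sum_{\substack{(m_1,\ldots,m_p)\\ \sum_j m_j=i,\ \sum_j jm_j=p}}\frac{i!}{m_1!\cdots m_p!}\prod_{j=1}^{p}\left(\frac{g^{(j)}(s)}{j!}\right)^{m_j} \;=\; \sum_{(n_1,\ldots,n_i)\in S_{i,p}}\prod_{k=1}^{i}\frac{g^{(n_k)}(s)}{n_k!},
\]
and \eqref{eq:conditional} follows at once. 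The main obstacle is bookkeeping: matching the multi-index form of Faà di Bruno to the ordered-tuple form of $S_{i,p}$ used in the statement. No analytic step is delicate, since $s<1$ makes everything in sight bounded; the finite $p$-th moment hypothesis is invoked only to guarantee that the formula persists and is meaningful up to the boundary case.
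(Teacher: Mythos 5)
Your proof is correct and follows essentially the same route as the paper: differentiate the branching identity $\E[s^{Z_{n+m}}\mid\mathscr{F}_n]=f_m(s)^{Z_n}$ $p$ times and apply Fa\`a di Bruno with outer function $y\mapsto y^{Z_n}$. The only differences are cosmetic — the paper quotes Fa\`a di Bruno directly in the ordered-tuple form indexed by $S_{i,p}$, so your multi-index-to-tuple conversion is absorbed into the statement of the formula, and the paper does not spell out the (routine) interchange of derivative and conditional expectation that you justify.
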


\begin{proof}
First recall Fa\`a di Bruno's formula:
\begin{equation}\label{eq:faa-di-bruno}
\frac{d^p}{dx^p}f\bigl(g(x)\bigr)=p!\sum_{i=1}^p\frac{1}{i!}f^{(i)}\bigl(g(x)\bigr)\sum_{(n_1,\ldots,n_i)\in S_{i,p}}\prod_{j=1}^i\frac{g^{(n_j)}(x)}{n_j!}\cdot
\end{equation}
Using \eqref{eq:generatrice-cond}, we get
\begin{align*}
\E\left[H_p(Z_{n+m})s^{Z_{n+m}-p}|\mathscr F_n\right] & =\frac{1}{p!}\frac{d^p}{ds^p}\left(\E[s^{Z_{n+m}}|\mathscr F_n]\right)=\frac{1}{p!}\frac{d^p}{ds^p}\left(f_m(s)^{Z_n}\right)\\
&=\sum_{i=1}^p H_i(Z_n)f_m(s)^{Z_n-i}\sum_{(n_1,\ldots,n_i)\in S_{i,p}}\prod_{j=1}^i\frac{f_m^{(n_j)}(s)}{n_j!}
\end{align*}
\end{proof}

\subsection{The limiting martingale for $s\in[0,1)$ in the non-critical case}

\begin{lem}\label{lem:asymptotic_f}
Let $p\in\N^*$ and $q$ be a non-critical offspring distribution that satisfies the $L\,\log\,L$ condition. We suppose that we are in the Schroeder case if $q$ is super-critical. Then, there exists a positive function $C_p$, such that for all $s\in[0,1)$:
\begin{equation}\label{eq:deriv}
f_n^{(p)}(s)\underset{n\rightarrow+\infty}{\sim}C_p(s) \gamma^{n}
\end{equation}
where $\gamma=f'(\kappa)\in(0,1)$.
\end{lem}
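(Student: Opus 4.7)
The plan is to prove \eqref{eq:deriv} by induction on $p$, using Fa\`a di Bruno's formula \eqref{eq:faa-di-bruno} applied to the functional equation $f_{n+1}=f\circ f_n$. The essential input, which I would cite rather than re-prove, is the geometric convergence $|f_n(s)-\kappa|=O(\gamma^n)$ for every $s\in[0,1)$: this is Heathcote's theorem in the sub-critical case (which is where the $L\log L$ condition is used), reduces to that case through conditioning on extinction when the process is super-critical with $\kappa>0$, and follows directly from $f(s)=\gamma s+O(s^2)$ near $0$ in the remaining sub-case $\kappa=0$, $\mathfrak{a}=1$.

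For the base case $p=1$, the chain rule already gives
\[
\frac{f_n'(s)}{\gamma^n}=\prod_{k=0}^{n-1}\frac{f'(f_k(s))}{\gamma},
\]
and since $f'$ is Lipschitz near $\kappa$, each factor $f'(f_k(s))/\gamma=1+O(\gamma^k)$ is summably close to $1$, so the infinite product converges absolutely to a finite non-zero limit $C_1(s)$.

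For the inductive step, suppose $f_n^{(p')}(s)\sim C_{p'}(s)\gamma^n$ for $1\le p'<p$. Fa\`a di Bruno applied to $f_{n+1}(s)=f(f_n(s))$, after extracting the $i=1$ term, gives
\[
f_{n+1}^{(p)}(s)=f'(f_n(s))\,f_n^{(p)}(s)+R_n(s),
\]
where $R_n(s)$ aggregates the contributions with $i\ge 2$, each being a bounded coefficient $f^{(i)}(f_n(s))$ (for which a finite $p$-th factorial moment is implicitly used) times a product $\prod_{j=1}^i f_n^{(n_j)}(s)$ with $n_j<p$. The induction hypothesis makes every such product $O(\gamma^{in})$, so $R_n(s)=O(\gamma^{2n})$. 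Setting $u_n:=f_n^{(p)}(s)/\gamma^n$ then yields the perturbed linear recursion
\[
u_{n+1}=(1+\delta_n)u_n+\varepsilon_n,\qquad \delta_n,\varepsilon_n=O(\gamma^n),
\]
to which a standard Gronwall-type iteration applies: $\prod_k(1+|\delta_k|)<+\infty$ first bounds $(u_n)$, and then $|u_{n+1}-u_n|\le|\delta_n||u_n|+|\varepsilon_n|$ is summable, so $u_n$ converges to a limit $C_p(s)\in[0,+\infty)$.

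The hard part is the strict positivity $C_p(s)>0$. Non-negativity is clear from $f_n^{(p)}(s)\ge p!\,\P(Z_n=p)\ge 0$. To upgrade this I would invoke Yaglom's theorem in the sub-critical case: $\P(Z_n>0)\sim c\gamma^n$ with $c>0$ and $\P(Z_n=k\mid Z_n>0)\to\nu_k$ for a probability measure $\nu$, so that
\[
C_p(s)=c\sum_{k\ge p}k(k-1)\cdots(k-p+1)\,\nu_k\,s^{k-p}
\]
is strictly positive on $(0,1)$ as soon as $\nu$ charges some $k\ge p$. The super-critical Schroeder case with $\kappa>0$ reduces to this through the identity $f(s)=\kappa\tilde{f}(s/\kappa)$, and the case $\kappa=0$ is handled via the explicit lower bound $\P(Z_n=p)\ge c\,q_1^n=c\,\gamma^n$ obtained by keeping a single line of descent alive while the other offspring die out.
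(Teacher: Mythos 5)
Your overall strategy --- induction on $p$, Fa\`a di Bruno applied to $f_{n+1}=f\circ f_n$, isolating the $i=1$ term, showing the $i\ge 2$ remainder is of lower order, then summing a telescoping recursion --- is exactly the paper's. The one step that does not hold as written is the justification ``since $f'$ is Lipschitz near $\kappa$'' in the base case $p=1$ (and, in the same spirit, the claim $\delta_n=O(\gamma^n)$ in the inductive step). In the sub-critical case $\kappa=1$, $\gamma=\mu$, and Lipschitz continuity of $f'$ at $1$ amounts to a finite second moment, which is not assumed: only $L\log L$ is, and there are $L\log L$ laws with $f''(1)=+\infty$. What is true is that $\sum_n\bigl(\mu-f'(f_n(s))\bigr)<+\infty$ is \emph{equivalent} to $L\log L$ (this is precisely where that hypothesis enters, via $\sum_n\bigl(1-f_n(s)^{k-1}\bigr)=O(\log k)$), so your product does converge, but for a reason you have not given. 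The paper sidesteps this entirely: for $p=1$ it cites the classical result of \cite{AN}, and in the inductive step it normalizes by $f'_n(s)$ rather than by $\gamma^n$, so that the $i=1$ term cancels exactly (since $f'_{n+1}=f'(f_n)\,f'_n$) and no perturbation $\delta_n$ ever appears; the increments of $f_n^{(p)}/f'_n$ are exactly the $i\ge 2$ remainder. For $p\ge 2$ your $\delta_n$ is harmless because the (implicit) $p$-th moment hypothesis supplies a second moment, but for $p=1$ you should either cite the classical result or replace ``Lipschitz'' by the $L\log L$ computation.

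Your treatment of positivity is genuinely different from the paper's and more informative: the paper obtains $C_p(s)>0$ softly from the nonnegativity of the telescoping increments, whereas you identify $C_p(s)=c\,g^{(p)}(s)$ with $g$ the generating function of the Yaglom limit (the interchange of limit and differentiation is legitimate by Vitali's theorem applied to the uniformly bounded analytic functions $g_n(s)=\E[s^{Z_n}\mid Z_n>0]$). Two loose ends: the reduction $f(s)=\kappa\tilde f(s/\kappa)$ in the super-critical Schroeder case with $\kappa>0$ only covers $s<\kappa$, so you need the monotonicity of $s\mapsto f_n^{(p)}(s)$ to extend positivity to $s\in[\kappa,1)$; and when $\kappa=0$ (so $q_0=0$) ``the other offspring die out'' is impossible --- the event giving $\P(Z_n=p)\ge c\,q_1^n$ is rather ``the population stays at size $1$ for $n-k$ generations and then reaches $p$ in the last $k$''. (Both your argument and the paper's tacitly assume that $\P(Z_n\ge p)$, resp.\ $\P(Z_n=p)$ at $s=0$, is eventually positive, which can fail for arithmetically degenerate supports; that is a defect of the statement itself rather than of your proof.)
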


Note that the $L\log L$ condition in the sub-critical case is needed to avoid $C_1\equiv 0$ (see  \cite{AN} pp. 38). 

\begin{proof}
The case $p=1$ is classical (with $C_1(s)=1$) and can be found  be found in \cite{AN} (pp. 38).
The rest of the proof is a generalisation of the case $p=2$ found in \cite{KMG}.

Assume that \eqref{eq:deriv} is true for all $j\leq p-1$.
Using again Fa\`a di Bruno's formula, we get:
\begin{align*}
\frac{f_{n+1}^{(p)}(s)}{f'_{n+1}(s)} &
=\frac{p!}{f'_{n+1}(s)}\sum_{i=1}^p \frac {1}{i!}f^{(i)}(f_n(s))\sum_{(n_1,\ldots,n_i)\in S_{i,p}} \prod _{j=1}^{i}{\frac {f_n^{(n_j)}(s)}{n_j!}}\\
&=\frac{p!}{f'_{n+1}(s)}\sum_{i=2}^{p} \frac {1}{i!}f^{(i)}(f_n(s))\sum_{(n_1,\ldots,n_i)\in S_{i,p}} \prod _{j=1}^{i}{\frac {f_n^{(n_j)}(s)}{n_j!}}\\
&\qquad +\frac{f'(f_n(s))f_{n}^{(p)}(s)}{f'_n(s)f'(f_n(s))}\cdot
\end{align*}

Therefore, we get
\begin{equation}\label{quotient}
\frac{f_{n+1}^{(p)}(s)}{f'_{n+1}(s)}-\frac{f_{n}^{(p)}(s)}{f'_n(s)}=\sum_{i=2}^{p} \frac {p!}{i!}f^{(i)}(f_n(s))\frac{1}{f'_{n+1}(s)}\sum_{(n_1,\ldots,n_i)\in S_{i,p}} \prod _{j=1}^{i}{\frac {f_n^{(n_j)}(s)}{n_j!}}
\end{equation}

For every $2\le i\le p$ and every $(n_1,\ldots,n_i)\in S_{i,p}$, as $n_j<p$ for every $j\le i$, we can use the induction hypothesis and deduce that there exists a positive constant $K$ such that 
\begin{equation}\label{geoconv}
\frac{1}{f'_{n+1}(s)} \prod _{j=1}^{i}{\frac {f_n^{(n_j)}(s)}{n_j!}}\underset{n\to+\infty}{\sim} K \gamma^{n(i-1)}.
\end{equation}
The continuity of $f^{(j)}$ and \eqref{eq:conv-kappa} imply that  $\lim_{n\rightarrow\infty}f^{(j)}(f_n(s))= f^{(j)}(\kappa)$ for all $j\geq 1$, thus formulas \eqref{quotient} and  \eqref{geoconv} implies that 
\[
\frac{f_{n+1}^{(p)}(s)}{f'_{n+1}(s)}-\frac{f_{n}^{(p)}(s)}{f'_n(s)}\underset{n\rightarrow+\infty}{\sim} K^\prime\gamma^n
\]
for some constant $K^\prime>0$.

As $\gamma\in(0,1)$, uniformly on any compact of $(0,1)$:
\[0<C_p(s):=\lim_{n\rightarrow+\infty}\frac{f^{(p)}_n(s)}{f'_n(s)}=\frac{f^{(p)}_1(s)}{f'_1(s)}+\sum_{n\geq1}\frac{f_{n+1}^{(p)}(s)}{f'_{n+1}(s)}-\frac{f^{(p)}_n(s)}{f'_n(s)}<+\infty\]
which is equivalent to $f^{(p)}_n(s)\underset{n\to+\infty}{\sim}C_p(s)f'_n(s)$. Applying again the lemma for $p=1$ gives the result.
\end{proof}

We can now state the main results concerning the limit of \eqref{eq:limit} with the penalization function \eqref{eq:penal-s<1}. We must separate two cases for super-critical offspring distributions depending on $q_0>0$ (which is equivalent to $\kappa >0$) or $q_0=0$ (which is equivalent to $\kappa=0$).

\begin{theo}\label{thm:s<1-q_0>0}
Let $p\in\N$ and let $q$ be a non-critical offspring distribution that admits a moment of order $p$ (and satisfies the $L\,\log\,L$ condition if $p=1$). We assume furthermore that $q_0>0$ (which is true if $q$ is sub-critical).
Then, for every $s\in[0,1)$, every $n\in\N$ and every $\Lambda_n\in\mathscr{F}_n$, we have
\begin{equation}
\lim_{m\to+\infty}\frac{\E\left[H_p(Z_{m+n})s^{Z_{m+n}}\ind_{\Lambda_n}\right]}{\E[H_p(Z_{m+n})s^{Z_{m+n}}]}=\E\left[\widetilde M_n^{(p)}\ind_{\Lambda_n}\right]
\end{equation}
with $\displaystyle \widetilde M_n^{(p)}=\begin{cases}
\kappa^{Z_n-1} & \mbox{if }p=0,\\
\frac{Z_n\kappa ^{Z_n-1}}{f'(\kappa)^n} & \mbox{if }p\ge 1.
\end{cases}$
\end{theo}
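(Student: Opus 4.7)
The plan is to separate the easy case $p=0$ from the case $p\ge 1$, which requires the combined strength of Lemmas \ref{lem:conditional} and \ref{lem:asymptotic_f}.

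For $p=0$, Lemma \ref{lem:conditional} is not needed: formula \eqref{eq:generatrice-cond} gives $\E[s^{Z_{m+n}}\mid\mathscr{F}_n]=f_m(s)^{Z_n}$, so that
\[
\frac{\E[H_0(Z_{m+n})s^{Z_{m+n}}\ind_{\Lambda_n}]}{\E[H_0(Z_{m+n})s^{Z_{m+n}}]}=\frac{\E[\ind_{\Lambda_n}f_m(s)^{Z_n}]}{f_{m+n}(s)}.
\]
Since $f_m(s)\to\kappa$ by \eqref{eq:conv-kappa} and $|f_m(s)^{Z_n}|\le 1$, bounded convergence yields the limit $\E[\ind_{\Lambda_n}\kappa^{Z_n}]/\kappa=\E[\ind_{\Lambda_n}\kappa^{Z_n-1}]$; the denominator $\kappa$ is non-zero because $q_0>0$ in the super-critical case and $\kappa=1$ in the sub-critical case.

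For $p\ge 1$, I would first rewrite the numerator via Lemma \ref{lem:conditional} as
\[
s^p\,\E\!\left[\ind_{\Lambda_n}\sum_{i=1}^p H_i(Z_n)f_m(s)^{Z_n-i}\sum_{(n_1,\ldots,n_i)\in S_{i,p}}\prod_{j=1}^i\frac{f_m^{(n_j)}(s)}{n_j!}\right],
\]
and the denominator as $s^pf_{m+n}^{(p)}(s)/p!$ via the identity $\E[H_p(Z_N)s^{Z_N-p}]=f_N^{(p)}(s)/p!$. Lemma \ref{lem:asymptotic_f} gives $f_{m+n}^{(p)}(s)\sim C_p(s)\gamma^{m+n}$ as $m\to+\infty$, so the denominator is equivalent to $s^pC_p(s)\gamma^{m+n}/p!$. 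Next, dividing the integrand of the numerator by $\gamma^m$ and using $f_m^{(n_j)}(s)\sim C_{n_j}(s)\gamma^m$ together with $f_m(s)\to\kappa$, the $i$-th term behaves like $\gamma^{m(i-1)}H_i(Z_n)\kappa^{Z_n-i}\sum_{(n_1,\ldots,n_i)\in S_{i,p}}\prod_jC_{n_j}(s)/n_j!$. Because $\gamma\in(0,1)$, only $i=1$ survives; since $S_{1,p}=\{(p)\}$ and $H_1(x)=x$, its limit is $Z_n\kappa^{Z_n-1}C_p(s)/p!$. Interchanging limit and expectation and dividing by the asymptotic denominator, the factor $s^pC_p(s)/p!$ cancels and one obtains $\E[\ind_{\Lambda_n}Z_n\kappa^{Z_n-1}]/\gamma^n$, which is the claimed limit since $\gamma=f'(\kappa)$.

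The main obstacle is justifying this interchange of limit and expectation. Each convergent sequence $f_m^{(n_j)}(s)/\gamma^m$ is bounded in $m$, so the sum under the expectation divided by $\gamma^m$ is dominated by $C(s,p)\sum_{i=1}^pH_i(Z_n)\le C'(s,p)(1+Z_n)^p$. The $p$-th moment hypothesis on $q$ propagates to $Z_n$ by induction on $n$ using the branching property, which furnishes the integrable dominating function and allows dominated convergence to conclude.
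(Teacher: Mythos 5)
Your proposal is correct and follows essentially the same route as the paper: formula \eqref{eq:generatrice-cond} plus bounded/dominated convergence for $p=0$, and for $p\ge 1$ the decomposition of Lemma \ref{lem:conditional} combined with the asymptotics $f_m^{(j)}(s)\sim C_j(s)\gamma^m$ of Lemma \ref{lem:asymptotic_f} to isolate the dominant $i=1$ term, with the same domination $H_i(Z_n)f_m(s)^{Z_n-i}\le H_i(Z_n)$ (valid since $H_i$ vanishes on integers below $i$) justifying the interchange. Your explicit remark that the $p$-th moment of $q$ propagates to $Z_n$ is a small but welcome addition that the paper leaves implicit.
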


\begin{proof}
Let us first consider the case $p=0$. Using Equation \eqref{eq:generatrice-cond}, we get
\[
\frac{\E[s^{Z_{m+n}}\bigm|\mathscr{F}_n]}{\E[s^{Z_{m+n}}]}=\frac{f_m(s)^{Z_n}}{f_{m+n}(s)}\cdot
\]
As, for every $s\in [0,1)$, $\lim_{m\to+\infty}f_m(s)=\kappa>0$, we get
\[
\lim_{m\to+\infty}\frac{\E[s^{Z_{m+n}}\bigm|\mathscr{F}_n]}{\E[s^{Z_{m+n}}]}=\kappa^{Z_n-1}.
\]
Moreover, using the increasing property of $(f_n(s))_{n\ge1}$ in $s$ and $n$, we have
\[
0\le \frac{f_m(s)^{Z_n}}{f_{m+n}(s)}\le \frac{1}{q_0}
\]
so the dominated convergence theorem gives the result.

\medskip
Let us now suppose that $p\ge 1$. Applying \eqref{eq:deriv}, as $0<\gamma<1$, we get that, for every $i\ge 1$,
\[
\sum_{(n_1,\ldots,n_i)\in S_{i,p}} \prod _{j=1}^{i}{\frac {f_m^{(n_j)}(s)}{n_j!}}=C \gamma^{mi}+o(\gamma^{mi})
\]
for some constant $C$. Therefore, in \eqref{eq:conditional}, we get that the term for $i=1$ is dominant when $m\to+\infty$ and therefore
\[
\E\left[H_p(Z_{n+m})s^{Z_{n+m}-p}\Bigm|\mathscr{F}_n\right]=\frac{1}{p!}Z_nf_m(s)^{Z_n-1}f_m^{(p)}(s)+o(\gamma^m)=\frac{1}{p!}C_p(s)Z_n\kappa^{Z_n-1}\gamma^m+o(\gamma^m).
\]
Moreover, as for every $1\le i\le p$, we have
\[
H_i(Z_n)f_m(s)^{Z_n-i}=H_i(Z_n)f_m(s)^{Z_n-i}\ind_{Z_n\ge i}\le H_i(Z_n),
\] 
we have by dominated convergence
\[
\lim_{m\to+\infty}\E\left[H_i(Z_n)f_m(s)^{Z_n-i}\right]=\E\left[ H_i(Z_n)\kappa^{Z_n-i}\right],
\]
and by the same arguments as above, we get
\[
\forall \Lambda_n\in\mathscr{F}_n,\ \E\left[H_p(Z_{n+m})s^{Z_{n+m}-p}\ind_{\Lambda_n}\right]=\frac{1}{p!}C_p(s)\E\left[Z_n\kappa^{Z_n-1}\ind_{\Lambda_n}\right]\gamma^m+o(\gamma^m).
\]

Using \eqref{eq:conditional} with $n=0$ and \eqref{eq:deriv}, we have for $m\to+\infty$,
\[
\E\left[H_p(Z_{n+m})s^{Z_{n+m}-p}\right]=\frac{1}{p!}f_{n+m}^{(p)}(s)=\frac{1}{p!}C_p(s)\gamma^{n+m}+o(\gamma^{n+m}).
\]

Combining these two asymptotics yields
\[
\lim_{m\to+\infty}\frac{\E[H_p(Z_{m+n})s^{Z_{m+n}}\ind_{\Lambda_n}]}{\E[H_p(Z_{m+n})s^{Z_{m+n}}]}=\E\left[\frac{Z_n\kappa^{Z_n-1}}{\gamma^n}\ind_{\Lambda_n}\right]=\E\left[\frac{Z_n\kappa^{Z_n-1}}{f'(\kappa)^n}\ind_{\Lambda_n}\right].
\]
\end{proof}

\begin{rem}\label{rem:autre_poly}
Let $P$ be a polynomial of degree $p>0$ that vanishes at 0. Therefore, there exists constants $(\alpha_k)_{1\le k\le p}$ such that
\[
P=\sum_{k=1}^p\alpha_kH_k.
\]
Then, the previous asymptotics give, for every $n,m\in\N$ and every $\Lambda_n\in\mathscr{F}_n$,
\[
\E\left[P(Z_{m+n})s^{Z_{m+n}}\ind_{\Lambda_n}\right]=\left(\sum_{k=1}^p\alpha_k\frac{s^k}{k!}C_k(s)\right)\E\left[Z_n\kappa^{Z_n-1}\ind_{\Lambda_n}\right]\gamma^{m}+o(\gamma^m)
\]
and
\[
\E\left[P(Z_{m+n})s^{Z_{m+n}}\right]=\left(\sum_{k=1}^p\alpha_k\frac{s^k}{k!}C_k(s)\right)\gamma^{m+n}+o(\gamma^m)
\]
which implies that we obtain the same limit with $P$ or with $H_p$ in the penalizing function.
\end{rem}

Recall Definition \eqref{eq:def-a} of $\mathfrak{a}$.

\begin{theo}\label{thm:s<1-q_0=0}
Let $q$ be a super-critical offspring distribution that admits a moment of order $p\in\mathbb N$, and let us suppose that $\mathfrak{a}>0$ (or equivalently $q_0=0$). 
Then, for every $s\in(0,1)$, every $n\in\N$ and every $\Lambda_n\in\mathscr{F}_n$, we have
\begin{equation}\label{lim34}
\lim_{m\to+\infty}\frac{\E\left[H_p(Z_{m+n})s^{Z_{m+n}}\ind_{\Lambda_n}\right]}{\E[H_p(Z_{m+n})s^{Z_{m+n}}]}=\begin{cases}
\E\left[q_1^{-n}\ind_{Z_n=1}\ind_{\Lambda_n}\right] & \mbox{if }\mathfrak{a}=1,\\
\E\left[q_{\mathfrak{a}}^{-\frac{\mathfrak{a}^n-1}{\mathfrak{a}-1}}\ind_{Z_n=\mathfrak{a}^n}\ind_{\Lambda_n}\right] & \mbox{if }\mathfrak{a}\ge 2.
\end{cases}
\end{equation}
\end{theo}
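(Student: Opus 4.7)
I would follow the same general strategy as in Theorem \ref{thm:s<1-q_0>0}: use the branching identity $\E[s^{Z_{m+n}}|\mathscr F_n]=f_m(s)^{Z_n}$ together with $H_p(x)s^{x-p}=\frac{1}{p!}\frac{d^p}{ds^p}s^x$ to rewrite
\[
\E\!\left[H_p(Z_{m+n})s^{Z_{m+n}}\ind_{\Lambda_n}\right]=\frac{s^p}{p!}\sum_{k\ge \mathfrak a^n}\P(\Lambda_n\cap\{Z_n=k\})\,\frac{d^p}{ds^p}f_m(s)^k,
\]
and express the denominator as $\frac{s^p}{p!}f_{m+n}^{(p)}(s)$. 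The task reduces to analysing, as $m\to\infty$, the asymptotic order of $\frac{d^p}{ds^p}f_m(s)^k$ for each admissible $k$ and identifying the smallest such $k$ as the one dictating the leading order of the numerator.

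In the case $\mathfrak a=1$, Lemma \ref{lem:asymptotic_f} applies with $\gamma=f'(0)=q_1\in(0,1)$ and gives $f_m^{(j)}(s)\sim C_j(s)q_1^m$ for every $j\ge 1$; integrating gives $f_m(s)\sim L(s)q_1^m$ for some positive $L$. A Fa\`a di Bruno expansion of $\frac{d^p}{ds^p}f_m(s)^k$ then shows that each summand is a product of $k-i$ factors $f_m(s)$ and $i$ factors $f_m^{(n_j)}(s)$, so the whole expression has order $q_1^{mk}$. Since $q_0=0$ forces $Z_n\ge 1$ and $q_1<1$, only $k=1$ contributes at the leading order $q_1^m$, with the $k=1$ summand simplifying to $f_m^{(p)}(s)/p!\sim C_p(s)q_1^m/p!$. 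Comparing to the denominator $\sim s^p C_p(s)q_1^{m+n}/p!$ and justifying the exchange of limit and sum by the uniform bound $0\le f_m(s)^{k-i}\le 1$ would recover $\E[q_1^{-n}\ind_{Z_n=1}\ind_{\Lambda_n}]$.

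In the case $\mathfrak a\ge 2$, Lemma \ref{lem:asymptotic_f} is no longer available ($\kappa=f'(\kappa)=0$), but the assumption $q_0=\cdots=q_{\mathfrak a-1}=0$ forces $Z_n\ge \mathfrak a^n$ with $\P(Z_n=\mathfrak a^n)=q_\mathfrak a^{(\mathfrak a^n-1)/(\mathfrak a-1)}$, so $f_n$ has a zero of order $\mathfrak a^n$ at $0$:
\[
f_n(u)=q_\mathfrak a^{(\mathfrak a^n-1)/(\mathfrak a-1)}u^{\mathfrak a^n}+O(u^{\mathfrak a^n+1}),\qquad u\to 0.
\]
Expanding $f_{m+n}=f_n\circ f_m$ via Fa\`a di Bruno, substituting the Taylor expansions of $f_n^{(i)}$ at $0$, and exploiting $f_m(s)\to 0$, I would derive the pivotal asymptotic
\[
f_{m+n}^{(p)}(s)\sim q_\mathfrak a^{(\mathfrak a^n-1)/(\mathfrak a-1)}\,\frac{d^p}{ds^p}f_m(s)^{\mathfrak a^n}\qquad (m\to+\infty).
\]
This identifies the $k=\mathfrak a^n$ contribution to the numerator as asymptotically $q_\mathfrak a^{-(\mathfrak a^n-1)/(\mathfrak a-1)}\P(\Lambda_n\cap\{Z_n=\mathfrak a^n\})$ times the denominator. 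For $k>\mathfrak a^n$, the factor $f_m(s)^{k-\mathfrak a^n}\to 0$, combined with a polynomial-in-$k$ Fa\`a di Bruno bound for $\frac{d^p}{ds^p}f_m(s)^k$, makes these terms negligible against $\frac{d^p}{ds^p}f_m(s)^{\mathfrak a^n}$, and summation yields the announced limit.

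The hard part is the case $\mathfrak a\ge 2$: the Schroeder-type geometric rate of Lemma \ref{lem:asymptotic_f} must be replaced by the doubly-exponential rate governed by $\mathfrak a^m$, and establishing the key asymptotic for $f_{m+n}^{(p)}$ demands careful treatment of the Fa\`a di Bruno terms with $i>\mathfrak a^n$ (which appear when $p>\mathfrak a^n$, typically for small $n$), proving them to be of strictly smaller order than the $i\le \mathfrak a^n$ terms. Control of the tail $k>\mathfrak a^n$ in the numerator sum must likewise be made uniform enough to justify passing the limit inside the summation; once both points are settled, the identification of the constant $q_\mathfrak a^{-(\mathfrak a^n-1)/(\mathfrak a-1)}$ is pure bookkeeping.
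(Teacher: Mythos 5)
Your reduction to the expansion of $\frac{d^p}{ds^p}f_m(s)^k$ is the same as the paper's Lemma \ref{lem:conditional}, and your treatment of the case $\mathfrak a=1$ (and of $p=0$ for any $\mathfrak a$) is sound and close to the paper's: there too, only the asymptotics $f_m^{(j)}(s)\sim C_j(s)q_1^m$ from Lemma \ref{lem:asymptotic_f} are used, and your identification of the constant $q_1^{-n}$ by comparing the $k=1$ term directly with $f_{m+n}^{(p)}(s)$ is in fact a little cleaner than the paper's appeal to the martingale-mean-one normalisation. For $\mathfrak a\ge 2$ you take a genuinely different route: rather than determining the individual asymptotics of $f_m^{(j)}(s)$, you compare numerator and denominator through $f_{m+n}=f_n\circ f_m$ and the order-$\mathfrak a^n$ zero of $f_n$ at the origin. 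For $p=0$ this works and is arguably simpler than the paper's use of \cite{FW}.

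For $p\ge 1$ and $\mathfrak a\ge 2$, however, the two points you defer are not bookkeeping; they are the entire difficulty, and your outline offers no mechanism to settle them. Both reduce to comparing quantities of the form $f_m(s)^{k-i}\prod_{j}f_m^{(n_j)}(s)$ across different values of $i$ and $k$. Concretely: the denominator is bounded below by its $i=1$ term, of order $f_m(s)^{\mathfrak a^n-1}f_m^{(p)}(s)$, while a term of the numerator with $k=\mathfrak a^n+1$ and $i=p\ge 2$ equals (up to constants) $f_m(s)^{\mathfrak a^n+1-p}f'_m(s)^p$; their ratio is $f_m(s)^{2-p}f'_m(s)^p/f_m^{(p)}(s)$, in which the explicit power of $f_m(s)$ blows up and must be beaten by the derivative factors. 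The bound $f_m(s)^{k-i}\le 1$ is useless here, and nothing in your sketch controls $\prod_j f_m^{(n_j)}(s)$ against $f_m(s)^{i-1}f_m^{(p)}(s)$. The same obstruction appears in your pivotal asymptotic for $f_{m+n}^{(p)}(s)$ when $p>\mathfrak a^n$, where the Fa\`a di Bruno terms with $i>\mathfrak a^n$ contribute $f_n^{(i)}(f_m(s))=O(1)$ times such a product. The paper resolves precisely this with Lemma \ref{lem:equiv_deriv}, proved by induction via \eqref{quotient} and Lemmas 10 and 13 of \cite{FW}: $f_m^{(j)}(s)\sim K_j(s)\mathfrak a^{mj}e^{\mathfrak a^mb(s)}$ with $b(s)<0$, so that every term with a given $i$ has order $\mathfrak a^{mp}e^{i\mathfrak a^mb(s)}$ and all the comparisons above become immediate. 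You need this lemma or an equivalent substitute; as written, the case $\mathfrak a\ge 2$, $p\ge 1$ is not proved.
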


\begin{proof}
The reasoning is similar to the previous one. 

Let us first consider the case $p=0$, $\mathfrak{a}=1$. In that case, we have (see \cite{AN} pp. 40 Corollary 1),
\begin{equation}\label{genequiv}
\forall s\in(0,1),\ f_m(s)\sim C_1(s)f'(0)^m=C_1(s)q_1^m
\end{equation}
for a positive function $C_1(s)$.
Therefore 
\[
\frac{\E[s^{Z_{m+n}}\bigm|\mathscr{F}_n]}{\E[s^{Z_{m+n}}]}=\frac{f_m(s)^{Z_n}}{f_{m+n}(s)}\sim q_1^{m(Z_n-1)}q_1^{-n}
\]
which converges to 0 if $Z_n>1$ and to $q_1^{-n}$ if $Z_n=1$. We conclude then by dominated convergence as  $\nicefrac{f_m(s)^{Z_n}}{f_{m+n}(s)}\le f_2(0)^{-1}$ for all $m\ge2$.

\medskip
Let us now suppose that $p\ge 1$ and $\mathfrak{a}=1$. Using \eqref{eq:conditional}, Lemma \ref{lem:asymptotic_f} and \eqref{genequiv}, we have
\[
\E\left[H_p(Z_{n+m})s^{Z_{n+m}-p}\Bigm|\mathscr{F}_n\right]=\sum_{i=1}^nC_i H_i(Z_n)q_1^{mZ_n}(1+o(1))
\]
for some constants $C_i$ (note that $\gamma=f^{\prime }(0)=q_1$ here) and
\[
\E\left[H_p(Z_{n+m})s^{Z_{n+m}-p}\right]=\frac{1}{p!}f_{n+m}^{(p)}(s)=\frac{1}{p!}C_p(s)q_1^{n+m}+o(q_1^m)
\]
which yields for some constant $K>0$
\[
\frac{\E\left[H_p(Z_{n+m})s^{Z_{n+m}}\Bigm|\mathscr{F}_n\right]}{\E\left[H_p(Z_{n+m})s^{Z_{n+m}}\right]}\sim K q_1^{-n}q_1^{m(Z_n-1)}\sum_{i=1}^nC_i H_i(Z_n).
\]
This ratio tends to 0 if $Z_n>1$ and to $K^\prime q_1^{-n}$ otherwise, with $K^\prime>0$. Dominated convergence Theorem ensures the existence of the limit \eqref{lim34} and we can easily find that $K^\prime=1$ recalling that necessarily $(K^\prime q_1^{-n}\mathds{1}_{Z_n=1})_{n\ge 0}$ is a martingale with mean equals to 1.

For the case $\mathfrak{a}\ge 2$, we use the asymptotics given in the following lemma whose proof is postponed after the current proof.

\begin{lem}\label{lem:equiv_deriv}
For every $p\in\N$ and every $s\in(0,1)$, there exists a positive constant $K_p(s)$ such that
\[
f_m^{(p)}(s)=K_p(s)\mathfrak{a}^{mp}e^{\mathfrak{a}^mb(s)}\left(1+o(1)\right)
\]
where
\[
b(s)=\log s +\sum_{j=0}^{+\infty}\mathfrak{a}^{-j-1}\log \frac{f_{j+1}(s)}{f_j(s)^\mathfrak{a}},
\]
\end{lem}

In that case, we have for $p=0$ as $m\to +\infty$
\[
\frac{\E[s^{Z_{m+n}}\bigm|\mathscr{F}_n]}{\E[s^{Z_{m+n}}]}=\frac{f_m(s)^{Z_n}}{f_{m+n}(s)}\sim K_0(s)^{Z_n-1}e^{\mathfrak{a}^mb(s)(Z_n-\mathfrak{a}^n)}\underset{m\to+\infty}{\longrightarrow}K_0(s)^{\mathfrak{a}^n-1}\ind_{Z_n=\mathfrak{a}^n}
\]
since $b(s)<0$.
We conclude either by saying that $K_0(s)=q_\mathfrak{a}^{-1/\mathfrak{a}-1}$ by \cite{FW} Lemma 10, or by using the fact that the limit is a martingale with mean 1.

For $p\ge 1$, we use Lemma \ref{lem:equiv_deriv} to get that, for every $1\le i\le p$ and every $(n_1,\ldots,n_i)\in S_{i,p}$, we have as $m\to+\infty$,
\[
\prod_{j=1}^i\frac{f_m^{(n_j)}(s)}{n_j!}\sim \prod _{j=1}^iK_{n_j}(s)\mathfrak{a}^{m.n_j}e^{\mathfrak{a}^mb(s)}/n_j!=K(s)\mathfrak{a}^{mp}e^{\mathfrak{a}^mb(s)i}
\]
for some constant $K(s)$. Hence, we have for $1\le i\le p$,
\[
\sum _{(n_1,\ldots, n_i)\in S_{i,p}}\prod_{j=1}^i\frac{f_m^{(n_j)}(s)}{n_j!}\sim \widetilde K_i(s)\mathfrak{a}^{mp}e^{\mathfrak{a}^mb(s)i}
\]
for another constant $\widetilde K_i(s)$ since all the terms in the sum are nonnegative and of the same order.

Finally, using \eqref{eq:conditional}, we get
\begin{align*}
\E\left[H_p(Z_{n+m})s^{Z_{n+m}-p}\bigm|\mathscr{F}_n\right] & =\sum_{i=1}^pH_i(Z_n)f_m(s)^{Z_n-i}\sum _{(n_1,\ldots, n_i)\in S_{i,p}}\prod_{j=1}^i\frac{f_m^{(n_j)}(s)}{n_j!}\\
& \sim \sum_{i=1}^p K_0(s)^{Z_n-i}\widetilde K_i(s)H_i(Z_n)e^{\mathfrak{a}^m(Z_n-i)b(s)}\mathfrak{a}^{mp}e^{\mathfrak{a}^mb(s)i}\\
& =\hat K(s,Z_n) e^{\mathfrak{a}^mZ_nb(s)}\mathfrak{a}^{mp}
\end{align*}
for some function $\hat K$, again since all the terms of the sum are nonnegative and of the same order.

This gives
\[
\frac{\E\left[H_p(Z_{n+m})s^{Z_{n+m}-p}\bigm|\mathscr{F}_n\right]}{\E\left[H_p(Z_{n+m})s^{Z_{n+m}-p}\right]}
\sim \frac{\hat K(s,Z_n)}{K_p(s)}\mathfrak{a}^{-np}e^{\mathfrak{a}^mb(s)(Z_n-\mathfrak{a}^n)}\underset{m\to+\infty}\longrightarrow C_n\ind_{Z_n=\mathfrak{a^n}}
\]
where $C_n$ is a constant depending on $n$ that is computed again by saying that the limit is a martingale with mean 1.
\end{proof}

\begin{rem}
The same arguments as in Remark \ref{rem:autre_poly} can be used to show that the limit does not depend of the choice of the polynomial
\end{rem}

We now finish this section with the proof of Lemma \ref{lem:equiv_deriv}.

\begin{proof}[Proof of Lemma \ref{lem:equiv_deriv}]
In the proof, the letter $K$ will denote a constant that depends on $s$ and may change from line to line.

\medskip
Lemma 10 in \cite{FW} states that, for every $\delta>0$ and every $s\in(0,1-\delta)$, we have
\[
f_m(s)= q_\mathfrak{a}^{-1/(\mathfrak{a}-1)}e^{a^mb(s)}\left(1+o(e^{-\delta \mathfrak{a}^m})\right)
\]
which implies the result for $p=0$.

To prove the result for $p=1$, we follow the same ideas as in the proof of \cite{FW}, Lemma 10. We still consider $s\in(0,1-\delta)$ for some $\delta>0$. First, we have
\[
f'_{m+1}(s)=f'_m(s)f'(f_m(s))=f'_m(s)\sum_{k=0}^{+\infty}(\mathfrak{a}+k)q_{\mathfrak{a}+k}f_m(s)^{\mathfrak{a}+k-1},
\]
which gives
\begin{align*}
0\le \frac{f'_{m+1}(s)}{f'_m(s)\mathfrak{a}q_\mathfrak{a}f_m(s)^{\mathfrak{a}-1}}-1 & \le \sum_{k=1}^{+\infty}\frac{(\mathfrak{a}+k)q_{\mathfrak{a}+k}}{\mathfrak{a}q_\mathfrak{a}}f_m(s)\\
& \le K e^{\mathfrak{a}^mb(s)}
\end{align*}
by Lemma 13 of \cite{FW}. Therefore, as $\ln(1+u)\le u$ for every nonnegative $u$, we have
\[
0\le \ln\left(\frac{f'_{m+1}(s)}{f'_m(s)\mathfrak{a}q_\mathfrak{a}f_m(s)^{\mathfrak{a}-1}}\right) \le K e^{\mathfrak{a}^mb(s)}
\]
which implies that the series
\[
\sum_{n=0}^{+\infty}\ln\left(\frac{f'_{m+1}(s)}{f'_m(s)\mathfrak{a}q_\mathfrak{a}f_m(s)^{\mathfrak{a}-1}}\right)
\]
converges.
Using the asymptotics for $f_m(s)$ of Lemma 10 of \cite{FW} we get that
\[
\ln\left(\frac{f'_{m+1}(s)}{f'_m(s)\mathfrak{a}e^{\mathfrak{a}^m(\mathfrak{a}-1)b(s)}}\right)\sim \ln\left(\frac{f'_{m+1}(s)}{f'_m(s)\mathfrak{a}q_\mathfrak{a}f_m(s)^{\mathfrak{a}-1}}\right)
\]
and hence that the series
\[
\tilde b(s):=\sum_{m=0}^{+\infty}\ln\left(\frac{f'_{m+1}(s)}{f'_m(s)\mathfrak{a}e^{\mathfrak{a}^m(\mathfrak{a}-1)b(s)}}\right)
\]
converges.

Moreover, as
\[
\ln\frac{f'_m(s)}{\mathfrak{a}^me^{(a^m-1)b(s)}}=\tilde b(s)-\sum_{k={m}}^{+\infty}\ln\left(\frac{f'_{k+1}(s)}{f'_k(s)\mathfrak{a}e^{\mathfrak{a}^k(\mathfrak{a}-1)b(s)}}\right)=\tilde b(s)+o(1),
\]
we obtain
\[
\frac{f'_m(s)}{\mathfrak{a}^me^{(a^m-1)b(s)}}=e^{\tilde b(s)}(1+o(1))
\]
which is the looked after formula for $p=1$.

\medskip
We finish the proof  by induction on $p$ as for the proof of Lemma \ref{lem:asymptotic_f}. Let $p\ge 2$ and let us suppose that the asymptotics of Lemma \ref{lem:equiv_deriv} are true for every $j<p$. 
Recall Equation \eqref{quotient}
\begin{equation}\label{eq:recurrence2}
\frac{f_{m+1}^{(p)}(s)}{f'_{m+1}(s)}-\frac{f_{m}^{(p)}(s)}{f'_m(s)}=\sum_{i=2}^{p} \frac {p!}{i!}f^{(i)}(f_m(s))\frac{1}{f'_{m+1}(s)}\sum_{(n_1,\ldots,n_i)\in S_{i,p}} \prod _{j=1}^{i}{\frac {f_m^{(n_j)}(s)}{n_j!}}
\end{equation}
By the induction assumption, we have for every $1\le i\le p$, using the same computations as in the proof of Theorem \ref{thm:s<1-q_0=0},
\[
\sum_{(n_1,\ldots,n_i)\in S_{i,p}} \prod _{j=1}^{i}{\frac {f_m^{(n_j)}(s)}{n_j!}}\sim K \mathfrak{a}^{mp}e^{\mathfrak{a}^mb(s)i}.
\]
We also have
\[
f'_{m+1}(s)\sim K \mathfrak{a}^m e^{\mathfrak{a}^{m+1}b(s)}
\]
and for every $i\ge 2$,
\begin{align*}
f^{(i)}(f_m(s)) & = \begin{cases}
Kf_m(s)^{\mathfrak{a}-i}(1+o(1)) & \mbox{if }i<\mathfrak{a},\\
O(1) & \mbox{if }i\ge \mathfrak{a},
\end{cases}\\
& =\begin{cases}
Ke^{\mathfrak{a}^m(\mathfrak{a}-i)b(s)}(1+o(1)) & \mbox{if }i<\mathfrak{a},\\
O(1) & \mbox{if }i\ge \mathfrak{a}.
\end{cases}
\end{align*}

Hence, in the sum of \eqref{eq:recurrence2}, the terms for $i\le \mathfrak{a}$ (which exist since $\mathfrak{a}\ge 2$) are dominant and of order $K\mathfrak{a}^{m(p-1)}$.

We get
\[
\frac{f_{m+1}^{(p)}(s)}{f'_{m+1}(s)}-\frac{f_{m}^{(p)}(s)}{f'_m(s)}\sim K\mathfrak{a}^{m(p-1)}
\]
and, as the series diverge ($\mathfrak{a}^{p-1}>1$), the partial sums are also equivalent, which gives
\[
f_m^{(p)}(s)\sim K\mathfrak{a}^{m(p-1)}f'_m(s)\sim K\mathfrak{a}^{mp}e^{\mathfrak{a}^mb(s)}
\]
using the result for $p=1$.
 \end{proof}
 
\subsection{The limiting martingale for $s\in[0,1]$ in the critical case}

We finish with the result for a critical offspring distribution. As the arguments are the same as for the proof of Theorem \ref{thm:s<1-q_0>0}, we only give the main lines in the proof of the following theorem.

\begin{theo}\label{thm:lim-critique}
Let $q$ be a critical offspring distribution that admits a moment of order $p\in\N$. Then, for every $s\in[0,1]$, every $n\in\N$ and every $\Lambda_n\in\mathscr{F}_n$, we have
\[
\lim_{m\to+\infty}\frac{\E[H_p(Z_{m+n})s^{Z_{m+n}}\ind_{\Lambda_n}]}{\E[H_p(Z_{m+n})s^{Z_{m+n}}]}=\begin{cases}
\E[\ind_{\Lambda_n}] & \mbox{if }p=0,\\
\E[Z_n\ind_{\Lambda_n}] & \mbox{if }p\ge 1.
\end{cases}
\]
\end{theo}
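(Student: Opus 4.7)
The plan is to follow the structure of the proof of Theorem~\ref{thm:s<1-q_0>0}, with two adjustments forced by criticality. First, Lemma~\ref{lem:asymptotic_f} no longer applies since $\gamma=f'(\kappa)=1$, so I have to derive fresh asymptotics for $f_m^{(p)}$ by hand. Second, the dominated-convergence bound has to be rebuilt, since now both numerator and denominator vanish (at $s\in[0,1)$) or diverge (at $s=1$) rather than decaying geometrically.

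The case $p=0$ is direct: the ratio equals $f_m(s)^{Z_n}/f_{m+n}(s)$, which is identically $1$ when $s=1$. For $s\in[0,1)$, the monotone convergence $f_k(s)\nearrow\kappa=1$ (excluding the trivial case $q_1=1$) gives pointwise convergence to $1$, and the monotonicity of $f$ yields the uniform bound $f_m(s)^{Z_n}/f_{m+n}(s)\le 1/f_n(s)$. Dominated convergence delivers $\E[\ind_{\Lambda_n}]$.

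For $p\ge 1$, the core step is to prove by induction on $p$ that $f_m^{(p)}(s)\sim C_p(s)\,m^{-2}$ for every $s\in[0,1)$, and $f_m^{(p)}(1)\sim C_p\,m^{p-1}$ (with $C_1=1$ since $f_m'(1)=\mu^m=1$). For the base case at $s\in[0,1)$, I use Kolmogorov's estimate $1-f_m(s)\sim 2/(f''(1)\,m)$: writing $\log f_m'(s)=\sum_{k=0}^{m-1}\log f'(f_k(s))$ and expanding $\log f'(f_k(s))\sim -f''(1)(1-f_k(s))\sim -2/k$ yields $f_m'(s)\sim K(s)/m^2$. The inductive step exploits the recurrence~\eqref{quotient}: by the induction hypothesis, the $i$-th summand on the right-hand side is $O(m^{-2(i-1)})$, so the series converges and $f_m^{(p)}(s)/f_m'(s)$ admits a positive finite limit. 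The argument at $s=1$ is formally analogous and delivers the $m^{p-1}$ rate.

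With these asymptotics, Lemma~\ref{lem:conditional} yields
\[
\E\bigl[H_p(Z_{n+m})s^{Z_{n+m}-p}\bigm|\mathscr{F}_n\bigr] = \tfrac{1}{p!}Z_n f_m(s)^{Z_n-1}f_m^{(p)}(s)\bigl(1+o(1)\bigr),
\]
since in the sum over $i$ the $i\ge 2$ terms are smaller than the $i=1$ term by a factor $\prod f_m^{(n_j)}/f_m^{(p)}=O(m^{-2(i-1)})$ at $s\in[0,1)$ (respectively $O(m^{-(i-1)})$ at $s=1$). Applying Fa\`a di Bruno to $f_{n+m}=f_n\circ f_m$ gives the same leading behaviour in the denominator, $\E[H_p(Z_{n+m})s^{Z_{n+m}-p}]\sim\tfrac{1}{p!}f_n'(f_m(s))f_m^{(p)}(s)$. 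Since $f_n'(f_m(s))\to f_n'(1)=1$ and $f_m(s)^{Z_n-1}\to 1$, the ratio converges almost surely to $Z_n$, and dominated convergence (with integrable envelope $H_i(Z_n)\le C(1+Z_n^p)$ from the finite $p$-th moment assumption) yields $\E[Z_n\ind_{\Lambda_n}]$. The principal obstacle is the derivation of the new asymptotics for $f_m^{(p)}$: the replacement of geometric decay by a delicate polynomial rate requires Kolmogorov's theorem together with a careful induction through~\eqref{quotient}, but once these are in hand the rest mirrors the proof of Theorem~\ref{thm:s<1-q_0>0}.
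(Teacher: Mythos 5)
Your overall architecture is the paper's: derive asymptotics for $f_m^{(p)}(s)$, feed them into Lemma~\ref{lem:conditional} to show the $i=1$ term dominates, identify the same leading factor in the denominator, and conclude by dominated convergence with the envelope $\sum_i c_i H_i(Z_n)$. The $p=0$ case, the $s=1$ case (your recurrence giving $f_m^{(p)}(1)\sim C_p\,m^{p-1}$ is equivalent to the paper's observation that $\E[H_p(Z_n)]$ is a polynomial of degree $p-1$ in $n$), and the final passage to the limit all match. The one point where you genuinely diverge is the key asymptotic at $s\in[0,1)$, and there your claim is sharper than what your argument delivers. From $\log f'(f_k(s))=-2/k+o(1/k)$ you cannot conclude $f_m'(s)\sim K(s)\,m^{-2}$: summing error terms that are merely $o(1/k)$ gives $\sum_{k\le m}\log f'(f_k(s))=-2\log m+o(\log m)$, hence only $f_m'(s)=m^{-2+o(1)}$. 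Pinning down the constant requires the errors to be summable, which needs a second-order refinement of Kolmogorov's estimate $1-f_k(s)\sim 2/(f''(1)k)$ and typically a higher moment than the theorem assumes (for $p=1$ even $f''(1)<+\infty$ is not granted --- though for $p=1$ the factor $f_m'(s)$ cancels exactly between numerator and denominator and no asymptotics are needed at all, as the paper's direct computation shows).

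Fortunately the theorem does not need the sharp rate. What your induction through \eqref{quotient} actually uses is (i) $f_m^{(p)}(s)/f_m'(s)\to C_p(s)\in(0,+\infty)$ and (ii) $\sum_m f_m'(s)<+\infty$, which makes the telescoping increments summable and renders the $i\ge 2$ terms negligible; both follow from the weaker bound $f_m'(s)=m^{-2+o(1)}$ that your computation does establish. This is exactly the content of the paper's Lemma~\ref{lem:asymptotic_c}, which sidesteps the rate question entirely by deducing $\sum_n f_n'(s)<+\infty$ from the Kesten--Ney--Spitzer asymptotics for $f_n(s)-f_n(0)$ via a power-series argument. So: same approach, with one intermediate lemma over-stated; replace ``$f_m^{(p)}(s)\sim C_p(s)\,m^{-2}$'' by ``$f_m^{(p)}(s)\sim C_p(s)f_m'(s)$ together with $\sum_m f_m'(s)<+\infty$'' and your proof closes.
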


\begin{proof}

We first study the case $s\in[0,1)$.

For $p=0$, the proof of Theorem \ref{thm:s<1-q_0>0} still applies with $\kappa=1$.

 For $p=1$, note that according to the dominated convergence theorem 
\begin{align*}
\frac{\E[Z_{n+m}s^{Z_{n+m}-1}|\mathscr F_n]}{f^{\prime}_{m+n}(s)}&=\frac{\E[Z_{n+m}s^{Z_{n+m}-1}|\mathscr F_n]}{\E[Z_{n+m}s^{Z_{n+m}-1}]}=\frac{Z_nf_m(s)^{Z_n-1}f^\prime_m(s)}{\E[Z_nf_m(s)^{Z_n-1}f^\prime_m(s)]}=\frac{Z_nf_m(s)^{Z_n-1}}{\E[Z_nf_m(s)^{Z_n-1}]}\\
&\underset{m\rightarrow+\infty}{\longrightarrow}\frac{Z_n}{\E[Z_n]}=Z_n
\end{align*}
giving our result. Moreover we can deduce from this limit's ratio that for all $n\ge0$, when $m$ goes to infinity
\begin{equation}\label{criticequiv}
f_{m+n}^{\prime}(s)\sim f^{\prime}_m(s).
\end{equation}

We then replace Lemma \ref{lem:asymptotic_f} by the following asymptotics for $f_n^{(p)}(s)$ whose proof is postponed at the end of the section.

\begin{lem}\label{lem:asymptotic_c}
In the critical case, for every $p\ge 1$, there exists a positive function $C_p$, such that for all $s\in[0,1)$:
\begin{equation}\label{eq:critic}
f_n^{(p)}(s)\underset{n\rightarrow+\infty}{\sim}C_p(s) f^\prime_n(s)
\end{equation}
\end{lem} 

The result then follows using the same arguments as in the proof of Theorem \ref{thm:s<1-q_0>0}.

\medskip
Let us now consider the case $s=1$. The case $p=0$ is trivial, so let us suppose that $p\ge 1$.

Equation \eqref{eq:conditional} applied to $s=1$ and $m=1$ gives
\[
\E[H_p(Z_{n+1})|\mathscr{F}_n]=\sum_{i=1}^pH_i(Z_n)\sum_{(n_1,\ldots,n_i)\in S_{i,p}}\prod_{j=1}^i\frac{1}{n_j!}\E[H_{n_j}(Z_1)]
\]
and an easy induction on $n$ and $p$ gives the following lemma.
\begin{lem}
Let $q$ be a critical offspring distribution that admits a moment of order $p$. Then there exists a polynomial $P$ of degree $p-1$ such that, for every $n\ge 0$,
\[
\E[H_p(Z_n)]=P(n).
\]
\end{lem}
This gives asympotics of $\E[H_p(Z_n)]$ of the form $cn^{p-1}$ as $n\to+\infty$. Plugging these asymptotics in \eqref{eq:conditional} and arguing as in the proof of Theorem \ref{thm:s<1-q_0>0} gives the result.

\end{proof}

\begin{proof}[Proof of Lemma \ref{lem:asymptotic_c}]
We first need to prove that $\sum_{n\geq0}f^{\prime}_n(s)<+\infty$.
Let $G$ be the function defined on $[0,1)$ by
\begin{align}
G(s):=\sum_{k\ge1}s^k\sum_{n\ge0}\P(Z_n=k)=\sum_{n\ge0}\sum_{k\ge1}s^{k}\P(Z_n=k)=\sum_{n\ge0}(f_n(s)-f_n(0)). 
\end{align}
According to \cite{KNS} pp.584, there exists a function $U$, such that for $s\in[0,1)$:
\[\lim_{n\rightarrow+\infty}n^2(f_n(s)-f_n(0))=U(s)<\infty\]
implying that $G$ is a power series that converges on $[0,1)$ and we have on this interval 
\[G^{\prime}(s)=\sum_{n\ge0}f^\prime_n(s)<+\infty.\]
The rest of the proof is very similar to the one of Lemma \ref{lem:asymptotic_f}: using \eqref{criticequiv} and the induction hypothesis, the equivalent of formula \eqref{geoconv} is
\begin{equation}
\frac{1}{f'_{n+1}(s)} \prod _{j=1}^{i}{\frac {f_n^{(n_j)}(s)}{n_j!}}\underset{n\to+\infty}{\sim} K (f^{\prime}_n(s))^{(i-1)}
\end{equation}
implying that
\[
\frac{f_{n+1}^{(p)}(s)}{f'_{n+1}(s)}-\frac{f_{n}^{(p)}(s)}{f'_n(s)}\underset{n\rightarrow+\infty}{\sim} K^\prime f^{\prime}_n(s)
\]
for some constant $K^\prime>0$. Consequently 
\[0<C_p(s):=\lim_{n\rightarrow+\infty}\frac{f^{(p)}_n(s)}{f'_n(s)}=\frac{f^{(p)}_1(s)}{f'_1(s)}+\sum_{n\geq1}\frac{f_{n+1}^{(p)}(s)}{f'_{n+1}(s)}-\frac{f^{(p)}_n(s)}{f'_n(s)}<+\infty\]
which is equivalent to $f^{(p)}_n(s)\underset{n\to+\infty}{\sim}C_p(s)f'_n(s)$. 
\end{proof}

\section{A new martingale in the super-critical case when $s\to 1$}

We are now considering the same penalization function \eqref{eq:penal-s<1} but with $s=1$ or with $s$ replaced by a sequence $(s_n)$ that tends to 1. More precisely, we consider functions of the form
\begin{equation}\label{eq:penal-s=1}
\varphi_p(n,x)=H_p(x)e^{-ax/\mu^n}
\end{equation}
for some non-negative constant $a$.

\subsection{The limiting martingale}

Recall that $H_p$ denotes the $p$-th Hilbert polynomial defined by \eqref{eq:Hilbert} and $\phi$ the Laplace transform of $W$ the limit of the martingale $(\nicefrac{Z_n}{\mu^n})_{n\ge0}$.   

For every $a\ge 0$, every $p\in\N^*$ and every $n\in\N$, we set, for every $x\in\R$,
\begin{equation}\label{eq:def-G}
G_n^{(p)}(x)=\frac{p!}{\phi^{(p)}(a)}\sum_{i=1}^p a_i^{(p)}(n)\phi(a/\mu^n)^{x-i}H_i(x)
\end{equation}
with
\begin{equation}\label{eq:def-a_i}
a_i^{(p)}(n)=\sum_{(n_1,\ldots,n_i)\in S_{i,p}}\prod_{r=1}^i\frac{\phi^{(n_r)}(a/\mu^n)}{n_r!}\cdot
\end{equation}

Let us first state the following relation between the coefficients $a_i^{(p)}(n)$ that will be used further.

\begin{lem}\label{lem:formule-a}
For every $i\in\N^*$ and every $(s_1,\ldots s_i)\in(\N^*)^i$, let us set $w=\sum_{j=1}^is_j$. Then, we have for every $n,p\ge 0$,
\[
\sum_{(\ell_1,\ldots,\ell_i)\in S_{i,p}}\prod_{j=1}^ia_{s_j}^{(\ell_j)}(n)=a_w^{(p)}(n)
\]
\end{lem}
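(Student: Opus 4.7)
The strategy is a one-line generating-function identity disguised by the notation. Let $\alpha=a/\mu^n$, and consider the formal power series in $t$
\[
F(t):=\phi(\alpha+t)-\phi(\alpha)=\sum_{k\ge 1}\frac{\phi^{(k)}(\alpha)}{k!}\,t^k.
\]
Since $F(t)$ has no constant term, the $i$-th power expands as
\[
F(t)^i=\sum_{p\ge i}\Biggl(\sum_{(n_1,\dots,n_i)\in S_{i,p}}\prod_{r=1}^{i}\frac{\phi^{(n_r)}(\alpha)}{n_r!}\Biggr)t^p=\sum_{p\ge i}a_i^{(p)}(n)\,t^p.
\]
This is the key observation: definition \eqref{eq:def-a_i} is precisely the rule for multiplying out $F(t)^i$ by collecting the contributions of all ordered compositions $(n_1,\dots,n_i)$ of $p$ into $i$ positive parts.

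Next, I would use the trivial algebraic identity $F(t)^w=\prod_{j=1}^{i}F(t)^{s_j}$, which holds because $w=\sum_{j=1}^{i}s_j$. Reading off the coefficient of $t^p$ on the left-hand side gives $a_w^{(p)}(n)$. On the right-hand side, expanding each factor as $F(t)^{s_j}=\sum_{\ell_j\ge s_j}a_{s_j}^{(\ell_j)}(n)\,t^{\ell_j}$ and multiplying yields
\[
[t^p]\prod_{j=1}^{i}F(t)^{s_j}=\sum_{\substack{\ell_1+\cdots+\ell_i=p\\ \ell_j\ge 0}}\prod_{j=1}^{i}a_{s_j}^{(\ell_j)}(n).
\]

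To close the argument, I would justify restricting the summation domain to $S_{i,p}$ (tuples of \emph{positive} integers summing to $p$). Since $S_{s_j,\ell_j}=\emptyset$ whenever $\ell_j<s_j$, we have $a_{s_j}^{(\ell_j)}(n)=0$ in that case; in particular $a_{s_j}^{(0)}(n)=0$ as each $s_j\ge 1$. Hence any tuple $(\ell_1,\dots,\ell_i)$ with some $\ell_j=0$ contributes $0$ to the sum, and we can replace $\ell_j\ge 0$ by $\ell_j\ge 1$, i.e.\ by $(\ell_1,\dots,\ell_i)\in S_{i,p}$, without altering the value. This yields the claimed identity.

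I do not foresee a real obstacle: the statement is essentially the multiplicativity of $F(t)^{\bullet}$ reinterpreted through Fa\`a di Bruno–type coefficients. The only delicate point is the bookkeeping around the summation ranges (positive versus nonnegative parts), handled by the vanishing remark above. An alternative rendering is purely combinatorial: $a_w^{(p)}(n)$ counts weighted compositions of $p$ into $w$ positive parts, and splitting such a composition into the first $s_1$ parts, the next $s_2$ parts, \ldots, and the last $s_i$ parts gives a bijection between these compositions and tuples $(\ell_1,\dots,\ell_i)\in S_{i,p}$ together with compositions of each $\ell_j$ into $s_j$ positive parts — precisely the right-hand side.
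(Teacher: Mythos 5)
Your proof is correct and follows essentially the same route as the paper: the authors introduce the polynomial $P(X)=\sum_{k=1}^{p}\frac{\phi^{(k)}(a/\mu^n)}{k!}X^k$, observe that $a_s^{(\ell)}(n)$ is the coefficient of $X^{\ell}$ in $P^s$, and conclude from $\prod_{j=1}^{i}P^{s_j}=P^{w}$, which is exactly your generating-function argument (with a truncated polynomial in place of your formal series $F$). Your extra care about the summation range (discarding tuples with some $\ell_j<s_j$, where $a_{s_j}^{(\ell_j)}(n)=0$) is a point the paper leaves implicit, so nothing is missing.
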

\begin{proof}
Let us consider the polynomial
\[
P(X)=\sum_{k=1}^p\frac{\phi^{(k)}(a/\mu^n)}{k!}X^k.
\]
Then, by \eqref{eq:def-a_i}, $a_s^{(\ell)}(n)$ is the coefficient of order $\ell$ of the polynomial $P^s$ for every $s\le\ell\le p$.
The lemma is then just a consequence of the formula
\[
\prod _{j=1}^iP^{s_j}(X)=P^w(X).
\]
\end{proof}

\begin{theo}\label{thm:limite_ratio}
Let $p\in\N^*$. Let $q$ be a super-critical offspring distribution that admits a moment of order $p$. Then, for every $a\ge 0$, every $n\in\N$, and every $\Lambda_n\in\mathscr{F}_n$,
\[
\lim_{m\to+\infty}\frac{\E\left[H_p(Z_{m+n})e^{-aZ_{m+n}/\mu^{m+n}}\ind_{\Lambda_n}\right]}{\E\left[H_p(Z_{m+n})e^{-aZ_{m+n}/\mu^{m+n}}\right]}=\E\left[\frac{1}{\mu^{pn}}G_n^{(p)}(Z_n)\ind_{\Lambda_n}\right].
\]
\end{theo}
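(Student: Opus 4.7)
The plan is to mimic the strategy of the proof of Theorem \ref{thm:s<1-q_0>0}: apply Lemma \ref{lem:conditional} with $s = s_m := e^{-a/\mu^{m+n}}$ to the numerator and with $n$ replaced by $0$ for the denominator, then identify the leading asymptotic as $m\to+\infty$. The crucial difference is that here $s_m \to 1$, so Lemma \ref{lem:asymptotic_f} no longer applies; the asymptotics are instead driven by the Laplace transform $\phi$ of the martingale limit $W$.

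The key technical step is to establish, for every $0 \le k \le p$,
\[
\frac{f_m^{(k)}(s_m)}{\mu^{mk}} \;\underset{m\to+\infty}{\longrightarrow}\; \E\bigl[W^k e^{-(a/\mu^n) W}\bigr] \;=\; (-1)^k \phi^{(k)}(a/\mu^n).
\]
I would derive this from the representation $f_m^{(k)}(s) = k!\,\E[H_k(Z_m) s^{Z_m-k}]$ together with the almost-sure and $L^p$ convergence of $Z_m/\mu^m$ towards $W$ (guaranteed by the $p$-th moment assumption on $q$). Indeed, $s_m^{Z_m-k} = e^{-(a/\mu^n)(Z_m/\mu^m)} e^{ak/\mu^{m+n}}$ converges pointwise to $e^{-(a/\mu^n)W}$ and is bounded by $e^{ak/\mu^{m+n}} \le C$, while $H_k(Z_m)/\mu^{mk} \to W^k/k!$. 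The $k=0$ case reduces to $f_m(s_m) \to \phi(a/\mu^n)$.

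Once these asymptotics are in hand, I plug them into the formula of Lemma \ref{lem:conditional}. Since $n_1+\cdots+n_i = p$ for any tuple in $S_{i,p}$, each term in the inner sum has leading order $\mu^{mp}$ and the product of the $i$ factors carries a sign $(-1)^{n_1+\cdots+n_i} = (-1)^p$. Collecting everything yields
\[
\E\left[H_p(Z_{m+n}) e^{-aZ_{m+n}/\mu^{m+n}}\bigm|\mathscr{F}_n\right] \sim \mu^{mp}(-1)^p \sum_{i=1}^p H_i(Z_n) \phi(a/\mu^n)^{Z_n-i} a_i^{(p)}(n),
\]
with $a_i^{(p)}(n)$ as in \eqref{eq:def-a_i}. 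Applying the same asymptotics with $m+n$ in place of $m$ (i.e.\ taking $n=0$) gives the denominator $\sim \mu^{(m+n)p}(-1)^p \phi^{(p)}(a)/p!$. Dividing, the $\mu^{mp}$ and $(-1)^p$ factors cancel, an extra $\mu^{-np}$ appears, and comparison with \eqref{eq:def-G} identifies the limit with $\mu^{-np} G_n^{(p)}(Z_n)$. The passage from the conditional statement to the expectation against $\ind_{\Lambda_n}$ is handled by dominated convergence, since the conditional ratio is bounded uniformly in $m$ by a constant times $Z_n^p+1$, which is integrable under the $p$-th moment hypothesis.

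The hard part will be the rigorous justification of the convergence $f_m^{(k)}(s_m)/\mu^{mk} \to (-1)^k \phi^{(k)}(a/\mu^n)$ at the level of expectations: the pointwise convergence of the integrand is immediate, but one needs sufficient uniform integrability of $(Z_m/\mu^m)^k$ for $0\le k\le p$, which is precisely where the $p$-th moment assumption on the offspring distribution is used. Everything past that point is routine algebraic manipulation of the formula in Lemma \ref{lem:conditional}, and is where the specific form \eqref{eq:def-G} of $G_n^{(p)}$ emerges naturally.
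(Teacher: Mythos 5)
Your proposal follows essentially the same route as the paper's proof: the paper also establishes $\mu^{-mk}f_m^{(k)}(e^{-a/\mu^{m+n}})\to(-1)^k\phi^{(k)}(a/\mu^n)$ by writing the derivative as an expectation involving $Z_m/\mu^m$, plugs this into Lemma \ref{lem:conditional} for both numerator and denominator, and concludes by dominated convergence using the bound $H_i(Z_n)f_m(\cdot)^{Z_n-i}\le H_i(Z_n)$. The only difference is that you explicitly flag the uniform-integrability issue behind the convergence of $\E[(Z_m/\mu^m)^k e^{-aZ_m/\mu^{m+n}}]$ (which the $p$-th moment hypothesis resolves via $L^p$ convergence of $Z_m/\mu^m$ to $W$), a point the paper passes over silently.
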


\begin{proof}
Let us first remark that for all $k\in\lbrace0,\dots,m\rbrace$
\begin{equation}
\frac{1}{\mu^{mk}}f_m^{(k)}\left(e^{-\frac{a}{\mu^{m+n}}}\right)
 =\E\left[e^{-a\frac{Z_m-k}{\mu^{n+m}}}\prod _{i=1}^k\frac{Z_m-i+1}{\mu^{m}}\right]
\underset{m\to+\infty}{\longrightarrow}(-1)^k\phi^{(k)}\left(\frac{a}{\mu^n}\right).\label{eq:limit-f_m}
\end{equation}

And, by the same argument, we have
\begin{align*}
\frac{1}{\mu^{p(m+n)}}\E\left[H_p(Z_{m+n})e^{-a\frac{Z_{m+n}}{\mu^{m+n}}}\right] & = \frac{1}{p!}\E\left[e^{-a\frac{Z_{m+n}}{\mu^{m+n}}}\prod_{i=1}^p\frac{Z_{m+n}-i+1}{\mu^{m+n}}\right]
\underset{m\to+\infty}{\longrightarrow}(-1)^p\frac{\phi^{(p)}(a)}{p!}\cdot
\end{align*}

Using Lemma \ref{lem:conditional} and \eqref{eq:limit-f_m}, we get
\begin{align*}
\frac{1}{\mu^{pm}} & \E\left[H_p(Z_{m+n})e^{-a\frac{Z_{m+n}}{\mu^{m+n}}}\Bigm| \mathscr{F}_n\right] \\
& =e^{-\frac{pa}{\mu^{m+n}}}\sum_{i=1}^p H_i(Z_n)f_m(e^{-\frac{a}{\mu^{m+n}}})^{Z_n-i}\sum_{(n_1,\ldots,n_i)\in S_{i,p}}\prod_{j=1}^i\frac{f_m^{(n_j)}(e^{-\frac{a}{\mu^{n+m}}})}{n_j!\,\mu^{n_jm}}\\
& \underset{m\to+\infty}{\longrightarrow}\sum_{i=1}^pH_i(Z_n)\phi(a/\mu^n)^{Z_n-i}\sum_{(n_1,\ldots,n_i)\in S_{i,p}}\prod_{j=1}^i(-1)^{n_j}\frac{\phi^{(n_j)}(a/\mu^n)}{n_j!}\\
& =(-1)^p\sum_{i=1}^pH_i(Z_n)\phi(a/\mu^n)^{Z_n-i}\sum_{(n_1,\ldots,n_i)\in S_{i,p}}\prod_{j=1}^i\frac{\phi^{(n_j)}(a/\mu^n)}{n_j!}\cdot
\end{align*}
Again, for every $1\le i\le p$, we have $H_i(Z_n)f_m(e^{-a/\mu^{m+n}})^{Z_n-i}\le H_i(Z_n)$ so, by dominated convergence, we get
\begin{multline*}
\lim_{n\to+\infty}\frac{\E\left[H_p(Z_{m+n})e^{-a\frac{Z_{m+n}}{\mu^{m+n}}}\ind_{\Lambda_n}\right]}{\E\left[H_p(Z_{m+n})e^{-a\frac{Z_{m+n}}{\mu^{m+n}}}\right]}\\
=\frac{1}{\mu^{pn}}\frac{p!}{\phi^{(p)}(a)}\sum_{i=1}^pH_i(Z_n)\phi(a/\mu^n)^{Z_n-i}\sum_{(n_1,\ldots,n_i)\in S_{i,p}}\prod_{j=1}^i\frac{\phi^{(n_j)}(a/\mu^n)}{n_j!}\cdot
\end{multline*}

\end{proof}

We end this subsection with the following uniqueness result concerning the limiting martingale in the homogeneous case i.e. $a=0$.

\begin{prop}\label{prop:unicite}
Let $p\ge 1$. There exists a unique polynomial $P_p$ of degree $p$ that vanishes at 0 such that the process $(X_n^{(p)})_{n\ge 0}$ defined by
\[
X_n^{(p)}=\frac{1}{\mu^{pn}}P_p(Z_n)
\]
is a martingale with mean 1.
\end{prop}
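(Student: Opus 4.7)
The plan is to expand any candidate $P_p$ in the Hilbert basis $H_1,\ldots,H_p$ of the space of polynomials of degree at most $p$ vanishing at $0$, translate the martingale identity into a triangular linear system on the coefficients, and finally impose the mean $1$ normalization $P_p(1)=1$ to single out a unique polynomial.

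First, write $P_p = \sum_{k=1}^p c_k H_k$. The martingale property is equivalent to $\E[P_p(Z_{n+1})\mid\mathscr{F}_n]=\mu^p P_p(Z_n)$, and I would unfold it using Lemma \ref{lem:conditional} with $m=1$, letting $s\to 1^-$ (a limit justified by the assumed finite $p$-th moment). This yields
\[
\E\bigl[H_k(Z_{n+1})\bigm|\mathscr{F}_n\bigr]=\sum_{i=1}^{k}\alpha_i^{(k)} H_i(Z_n),\qquad \alpha_i^{(k)}:=\sum_{(n_1,\ldots,n_i)\in S_{i,k}}\prod_{j=1}^{i}\frac{f^{(n_j)}(1)}{n_j!},
\]
with leading value $\alpha_k^{(k)}=(f'(1))^{k}=\mu^{k}$ (the only element of $S_{k,k}$ being $(1,\ldots,1)$). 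Matching coefficients on $H_i(Z_n)$, using linear independence of $H_1,\ldots,H_p$ (available since $Z_n$ has infinite support in the non-degenerate super-critical case), I obtain the triangular relations
\[
c_i(\mu^p-\mu^i)=\sum_{k=i+1}^{p}c_k\,\alpha_i^{(k)},\qquad i=1,\ldots,p.
\]
The $i=p$ equation is automatic; for $i<p$ the factor $\mu^p-\mu^i$ is nonzero since $\mu>1$, so $c_i$ is uniquely determined by $c_{i+1},\ldots,c_p$. By backward induction the space of martingale-giving polynomials is one-dimensional, parametrized by $c_p$, with every $c_i$ a linear function of $c_p$.

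For the normalization, I use $H_1(1)=1$ and $H_k(1)=0$ for $k\ge 2$ (the factor $(1-1)$ kills the product), which yields $P_p(1)=c_1$. The mean $1$ condition therefore reads $c_1=1$, and writing $c_1=L\,c_p$ for a constant $L$ depending only on $q$ and $p$, this selects a unique $c_p$ as long as $L\neq 0$.

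The only delicate step is showing $L\neq 0$ without running the recursion explicitly. I would bypass this by invoking Theorem \ref{thm:limite_ratio} at $a=0$: since $\phi(0)=1$, the function $G_n^{(p)}$ reduces to a polynomial $G^{(p)}$ independent of $n$, of degree exactly $p$ (its coefficient of $H_p$ is proportional to $a_p^{(p)}=(\phi'(0))^p\neq 0$), vanishing at $0$, and satisfying $G^{(p)}(1)=1$ by a direct check on the coefficient of $H_1$ (since $S_{1,p}=\{(p)\}$ gives $a_1^{(p)}=\phi^{(p)}(0)/p!$). By the general penalization principle, $G^{(p)}(Z_n)/\mu^{pn}$ is a mean $1$ martingale, so $P_p:=G^{(p)}$ is a valid choice with $c_1=1$ and $c_p\neq 0$, forcing $L\neq 0$. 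Uniqueness of $P_p$ follows at once.
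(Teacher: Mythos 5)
Your argument is correct in substance, but it takes a genuinely different route from the paper's. The paper never uses the conditional expectation: it only records the unconditional identities $\E[P_k(Z_n)]=\mu^{kn}$ for $n=0,\dots,p-1$ and $k\le p$, stacks them into a matrix equation $FC=M$ with $M$ a Vandermonde matrix, and deduces that $F$ is invertible because Theorem \ref{thm:limite_ratio} supplies one solution $C$ that is triangular with nonzero diagonal; uniqueness is then immediate from $C=F^{-1}M$. You instead exploit the full martingale property through the one-step transition $\E[H_k(Z_{n+1})\mid\mathscr{F}_n]=\sum_{i\le k}\alpha_i^{(k)}H_i(Z_n)$, which yields a triangular recursion $c_i(\mu^p-\mu^i)=\sum_{k>i}c_k\alpha_i^{(k)}$ and hence a one-dimensional solution space, pinned down by $P_p(1)=c_1=1$. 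Your route gives slightly finer information (the space of degree-$\le p$ polynomials vanishing at $0$ whose rescaling is a martingale is exactly one-dimensional), whereas the paper's shows uniqueness under the weaker requirement that only the first $p$ means match. Both proofs lean on Theorem \ref{thm:limite_ratio} at $a=0$ to exclude degeneracy ($\det C\ne 0$ there, $L\ne 0$ here), and your checks that $G^{(p)}(1)=1$ and that its $H_p$-coefficient is $\phi'(0)^p\ne 0$ are correct (under the $L\log L$ condition, which is needed for $\phi$ to be nontrivial in any case).

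One caveat on your coefficient-matching step: identifying coefficients in $\sum_i\bigl(\sum_{k\ge i}c_k\alpha_i^{(k)}\bigr)H_i(Z_n)=\mu^p\sum_i c_iH_i(Z_n)$ requires $H_1(Z_n),\dots,H_p(Z_n)$ to be linearly independent as random variables, i.e.\ the support of $Z_n$ must contain at least $p$ nonzero points for some $n$. This holds for every non-degenerate super-critical $q$ (take $n$ large enough), but fails when $q=\delta_{\mathfrak{a}}$ with $\mathfrak{a}\ge 2$, where $Z_n$ is deterministic; in that case uniqueness still holds by interpolating at the $p+1$ points $0,1,\mathfrak{a},\dots,\mathfrak{a}^{p-1}$, and the paper's expectation-only argument covers it without modification. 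You should either exclude the degenerate case explicitly or add that one-line interpolation remark.
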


\begin{proof}

Existence is given by Theorem \ref{thm:limite_ratio}.

For uniqueness, let us write
\[
P_k=\sum_{i=1}^k c_i^{(k)}H_i(Z_n)
\]
and let us suppose that $X_n^{(k)}$ is a martingale with mean 1 for every $1\le k\le p$. This implies by taking the expectation that, for every $n\ge 0$ and every $k\le p$,
\begin{equation}\label{eq:expectation}
\sum_{i=1}^k \frac{1}{i!}c_i^{(k)}f_n^{(i)}(1)=\mu^{kn}.
\end{equation}
If we set for $1\le i,j\le p$
\[
f_{ij}=f_{i-1}^{(j)}(1),\quad c_{ij}=\begin{cases}c_i^{(j)} & \mbox{if }i\le j,\\ 0 & \mbox{if }i>j,\end{cases}\quad m_{ij}=\mu^{(i-1)j}
\]
and if we consider the square matrices of order $p$
\[
F=(f_{ij})_{1\le i,j\le p},\quad C=(c_{ij})_{1\le i,j\le p},\quad M=(m_{ij})_{1\le i,j\le p},
\]
Equations \eqref{eq:expectation} for $0\le n\le p-1$ write
\begin{equation}\label{eq:matrices}
FC=M
\end{equation}
where $C$ contains the unknown variables.

We know that, if $c_i^{(k)}=a_i^{(k)}$ where the $a_i^{(k)}$ are defined by \eqref{eq:def-a_i} with $a=0$ (and hence do not depend on $n$), $C$ is indeed a solution of Equation \eqref{eq:matrices} and is triangular with positive coefficients and hence $\det C\ne 0$. $M$ is a Vandermond matrix and hence also satisfies $\det M\ne 0$. Equation \eqref{eq:matrices} hence implies $\det F=\det M/\det C\ne 0$ which proves that $F$ is invertible and that \eqref{eq:matrices} has a unique solution.

\end{proof}

This proposition implies in particular that the choice of $H_p$ in Theorem 4.2 (if $a=0$) in the penalizing function is not relevant and any other polynomial of degree $p>1$ that vanishes at 0 gives the same limit.

\subsection{Distribution of the penalized tree}

In this section, we fix an integer $p\ge 0$ and consider an offspring distribution $q$ that admits a $p$-th moment (and that satisfies the $L\log L$ condtion if $p<2$). 

For every $n\ge n_0$, we consider the function
\[
G_{n,n_0}^{(p)}(x)=\begin{cases}
\displaystyle \frac{\phi(a/\mu^n)^x}{\phi(a/\mu^{n_0})} & \mbox{if }p=0\\
\displaystyle \frac{p!}{\phi^{(p)}(a/\mu^{n_0})}\sum_{i=1}^pa_i^{(p)}(n)H_i(x)\phi(a/\mu^n)^{x-i} & \mbox{if }p\ge 1
\end{cases}
\]
with $a_i^{(p)}(n)$ defined by \eqref{eq:def-a_i}
and we consider the martingale
\[
M_{n,n_0}^{(p)}=\frac{1}{\mu^{p(n-n_0)}}G_{n,n_0}^{(p)}(Z_n)
\]

We then define a new probability measure $\Q_{n_0}^p$ on $\T$ by
\begin{equation}\label{eq:def-Q}
\forall n\ge {n_0},\qquad \frac{d\Q_{n_0}^{(p)}}{d\P_{n_0}}_{|_{\mathscr{F}_n}}=M_{n,n_0}^{(p)}.
\end{equation}

We now define another probability measure $\mathbf{Q}_{n_0}^{(p)}$ on $\T$ as follows
\begin{defi}\label{def:biased-tree}
Under $\mathbf{Q}_{n_0}^{(p)}$, the random tree $\tau$ is distributed as an inhomogeneous multi-type Galton-Watson tree as follows
\begin{itemize}
\item The types of the nodes run from $0$ to $p$.
\item The root of $ \tau$ is of type $p$ and starts at height $n_0$.
\item A node of type $\ell$ at height $n$ gives, independently of the other nodes, $k$ offspring with respective types $(\ell_1,\ldots,\ell_k)$ such that $\ell_1+\cdots+\ell_k=\ell$ with probability
\[
q_k\frac{1}{\mu^\ell}\frac{\ell!}{\phi^{(\ell)}(a/\mu^n)}\prod_{j=1}^k\frac{\phi^{(\ell_j)}(a/\mu^{n+1})}{\ell_j!}\cdot
\]
\end{itemize}
\end{defi}

\begin{rem}
A node of type 0 at height $n$ gives $k$ offspring with probability
\[
q_k^0(n)=q_k\frac{\phi(a/\mu^{n+1})^k}{\phi(a/\mu^n)},
\]
all of them being of type 0.

Remark also that $q_k^0(n)=q_k$ if $a=0$.
\end{rem}

\begin{rem}
If a node is of type $\ell>0$, the condition $\ell_1+\cdots+\ell_k=\ell$ implies that this node has at least one offspring with non-zero type.
\end{rem}

\begin{rem}
The last property also writes: A node of type $\ell$ at height $n$ gives, independently of the other nodes, $k$ offspring, $k-i$ being of type 0, and $i$ of respective types $(\ell_1,\ldots,\ell_i)\in S_{i,\ell}$, with probability
\begin{equation}\label{eq:def-proba}
q_k\frac{\ell!}{\mu^\ell}\frac{\phi(a/\mu^{n+1})^{k-i}}{\phi^{(\ell)}(a/\mu^n)}\binom{k}{i}\prod_{j=1}^i\frac{\phi^{(\ell_j)}(a/\mu^{n+1})}{\ell_j!}\cdot
\end{equation}
The $i$ nodes with non-zero types are uniformly chosen among the $k$ offspring.

This equivalent formulation will be used in all the next proofs.
\end{rem}

\begin{lem}
Equation \eqref{eq:def-proba} indeed defines a probability distribution.
\end{lem}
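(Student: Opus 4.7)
The plan is to show the total mass is $1$ by recognizing the sum of the probabilities as a manifestation of Faà di Bruno's formula \eqref{eq:faa-di-bruno} applied to the composition $f\circ\phi$, combined with Schroeder's equation \eqref{eq:schroeder}. The whole argument is essentially algebraic once we set up the right generating function identity.

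First I would handle the base case $\ell=0$: here $i$ is forced to be $0$, and the probability of having $k$ offspring collapses to $q_k\,\phi(a/\mu^{n+1})^k/\phi(a/\mu^n)$. Summing over $k$ gives $f(\phi(a/\mu^{n+1}))/\phi(a/\mu^n)$, which equals $1$ by \eqref{eq:schroeder}.

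For $\ell\geq 1$, I would first recognize the inner sum over $(\ell_1,\ldots,\ell_i)\in S_{i,\ell}$ as exactly the quantity $a_i^{(\ell)}(n+1)$ defined in \eqref{eq:def-a_i}. The total mass to compute then becomes
\[
\frac{\ell!}{\mu^\ell\phi^{(\ell)}(a/\mu^n)}\sum_{k\ge 0}q_k\sum_{i=1}^{\min(k,\ell)}\binom{k}{i}\phi(a/\mu^{n+1})^{k-i}\,a_i^{(\ell)}(n+1).
\]
The heart of the proof is to identify this double sum as $(f\circ\phi)^{(\ell)}(a/\mu^{n+1})/\ell!$. Using $\frac{1}{i!}f^{(i)}(u)=\sum_{k\ge i}q_k\binom{k}{i}u^{k-i}$ and substituting $u=\phi(s)$, one sees that Faà di Bruno's expansion
\[
(f\circ\phi)^{(\ell)}(s)=\ell!\sum_{i=1}^{\ell}\frac{1}{i!}f^{(i)}(\phi(s))\sum_{(\ell_1,\ldots,\ell_i)\in S_{i,\ell}}\prod_{j=1}^i\frac{\phi^{(\ell_j)}(s)}{\ell_j!}
\]
becomes, after swapping the order of summation and evaluating at $s=a/\mu^{n+1}$, precisely $\ell!$ times the double sum above.

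To finish, I would differentiate Schroeder's equation $f(\phi(s))=\phi(\mu s)$ exactly $\ell$ times, yielding $(f\circ\phi)^{(\ell)}(s)=\mu^\ell\phi^{(\ell)}(\mu s)$; at $s=a/\mu^{n+1}$ this equals $\mu^\ell\phi^{(\ell)}(a/\mu^n)$, which cancels the prefactor and gives $1$. I do not anticipate any substantive obstacle: the only care needed is in the index bookkeeping (terms with $i>k$ vanish because $\binom{k}{i}=0$, and terms with $i>\ell$ never appear in Faà di Bruno), together with standard justification that the series for $f(u)$ and its derivatives converge absolutely at $u=\phi(a/\mu^{n+1})\in[0,1)$, which legitimizes the swap of summations.
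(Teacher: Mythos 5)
Your proof is correct, but it takes a genuinely different route from the paper's. Both arguments begin the same way, collapsing the inner sum over $S_{i,\ell}$ into $a_i^{(\ell)}(n+1)$ via \eqref{eq:def-a_i} and then interchanging the sums over $k$ and $i$. At that point the paper invokes the fact that $M_{n+1,n}^{(\ell)}$ is a martingale with mean one (a property inherited from Theorem \ref{thm:limite_ratio} and the general penalization framework), writes out the expectation $\E_n[H_i(Z_{n+1})\phi(a/\mu^{n+1})^{Z_{n+1}-i}]$ as a series in $k$, and reads off the identity. You instead verify the identity analytically: the double sum is $(f\circ\phi)^{(\ell)}(a/\mu^{n+1})/\ell!$ by Fa\`a di Bruno applied to $f\circ\phi$ together with $\frac{1}{i!}f^{(i)}(u)=\sum_{k\ge i}q_k\binom{k}{i}u^{k-i}$, and the $\ell$-fold derivative of Schroeder's equation \eqref{eq:schroeder} gives $(f\circ\phi)^{(\ell)}(s)=\mu^\ell\phi^{(\ell)}(\mu s)$, which cancels the prefactor. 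Your version is self-contained — it does not lean on the penalization limit or on the martingale property of $M^{(\ell)}_{n+1,n}$, and it makes transparent that the normalization is exactly the functional equation for $\phi$ — at the cost of being slightly longer than the paper's appeal to machinery already in place. One cosmetic point: when $a=0$ you have $\phi(a/\mu^{n+1})=1$, not a point of $[0,1)$, so the absolute convergence of $f^{(i)}$ at that argument should be justified by the standing $p$-th moment assumption on $q$ (and the corresponding finiteness of $\E[W^{\ell}]$ for $\phi^{(\ell)}(0)$) rather than by strict inequality; this is the same implicit regularity the paper uses and is not a gap.
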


\begin{proof}
We must prove that
\[
\sum_{k=1}^{+\infty}\sum_{i=1}^{k\wedge \ell}\sum_{(\ell_1,\dots,\ell_i)\in S_{i,\ell}}q_k\frac{\ell!}{\mu^\ell}\frac{\phi(a/\mu^{n+1})^{k-i}}{\phi^{(\ell)}(a/\mu^n)}\binom{k}{i}\prod_{j=1}^i\frac{\phi^{(\ell_j)}(a/\mu^{n+1})}{\ell_j!}=1.
\]

First remark that formula \eqref{eq:def-a_i} gives:
\[
\sum_{(\ell_1,\dots,\ell_i)\in S_{i,\ell}}\prod_{j=1}^i\frac{\phi^{(\ell_j)}(a/\mu^{n+1})}{\ell_j!}=a_i^{(\ell)}(n+1).
\]

Now, as $M_{n+1,n}^{(\ell)}$ is a martingale with mean one, we have by taking the expectation
\begin{multline*}
\frac{\ell!}{\mu^\ell \phi^{(\ell)}(a/\mu^n)}\sum_{i=1}^\ell a_i^{(\ell)}(n+1)\E_n\left[H_i(Z_{n+1})\phi(a/\mu^{n+1})^{Z_{n+1}-i}\right]=1\\
\iff \frac{\ell!}{\mu^\ell \phi^{(\ell)}(a/\mu^n)}\sum_{i=1}^\ell a_i^{(\ell)}(n+1)\sum_{k=i}^{+\infty}q_kH_i(k)\phi(a/\mu^{n+1})^{k-i}=1,
\end{multline*}
which ends the proof by inverting the sums and noting that $H_i(k)=\binom{k}{i}$.
\end{proof}

\begin{theo}\label{thm:distribution_p-spines}
For every $n_0\ge 0$ the probability measures $\mathbf{Q}_{n_0}^{(p)}$ end $\Q_{n_0}^{(p)}$ coincide.
\end{theo}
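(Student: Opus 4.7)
The plan is to show that both probability measures agree on every cylinder $\{r_n(\tau)=\bt\}$ with $n\ge n_0$ and $\bt\in\T_{n_0}^{(n)}$. By the definition of $\Q_{n_0}^{(p)}$ via the density $M_{n,n_0}^{(p)}$, and by marginalizing the types out of Definition~\ref{def:biased-tree}, the common factor $\prod_{u\in r_{n-1}(\bt)}q_{k_u(\bt)}$ cancels on both sides, so it suffices to establish the identity $\Sigma(\bt;p,n_0)=M_{n,n_0}^{(p)}(\bt)$, where
\[
\Sigma(\bt;p,n_0):=\sum_{\vec\ell}\prod_{u\in r_{n-1}(\bt)}\frac{\ell_u!}{\mu^{\ell_u}\phi^{(\ell_u)}(a/\mu^{|u|})}\prod_{j=1}^{k_u(\bt)}\frac{\phi^{(\ell_{uj})}(a/\mu^{|u|+1})}{\ell_{uj}!}
\]
and the sum runs over admissible type assignments $\vec\ell=(\ell_u)_{u\in\bt}$ with $\ell_{\mathbf{1}_{n_0}}=p$ and $\sum_{v\text{ child of }u}\ell_v=\ell_u$ at every internal $u$.

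I argue by induction on the height $n-n_0$ of $\bt$. The base case $\bt=\{\mathbf{1}_{n_0}\}$ is immediate: the left-hand side is an empty product and $G_{n_0,n_0}^{(p)}(1)=1$ because $H_i(1)=\delta_{i,1}$ collapses the defining sum to its $i=1$ summand, which evaluates to $a_1^{(p)}(n_0)\cdot p!/\phi^{(p)}(a/\mu^{n_0})=1$. For the inductive step, I decompose $\bt$ at the root: if the root of type $p$ has $k$ children spawning subtrees $\bt_1,\dots,\bt_k$ of strictly smaller height (rooted at height $n_0+1$), then Definition~\ref{def:biased-tree} applied at the root, together with the convention $\Sigma(\bt_j;0,n_0+1):=\phi(a/\mu^n)^{Z_n(\bt_j)}/\phi(a/\mu^{n_0+1})$ (which agrees with $M_{n,n_0+1}^{(0)}(\bt_j)$ by a direct level-by-level telescoping), yields
\[
\Sigma(\bt;p,n_0)=\frac{p!}{\mu^p\phi^{(p)}(a/\mu^{n_0})}\sum_{\substack{(\ell_1,\dots,\ell_k)\in\N^k\\ \sum_j\ell_j=p}}\prod_{j=1}^k\frac{\phi^{(\ell_j)}(a/\mu^{n_0+1})}{\ell_j!}\,\Sigma(\bt_j;\ell_j,n_0+1),
\]
and the induction hypothesis replaces each $\Sigma(\bt_j;\ell_j,n_0+1)$ by $M_{n,n_0+1}^{(\ell_j)}(\bt_j)$.

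The main technical step is the ensuing algebraic reassembly. Each explicit $\phi^{(\ell_j)}(a/\mu^{n_0+1})$ factor cancels the corresponding denominator in $G_{n,n_0+1}^{(\ell_j)}$; the $\mu$-powers combine into $\mu^{-p(n-n_0)}$; expanding each $G_{n,n_0+1}^{(\ell_j)}$ introduces an index $i_j\in\{1,\dots,\ell_j\}$ (with the convention $i_j=0$ when $\ell_j=0$) together with a factor $a_{i_j}^{(\ell_j)}(n)H_{i_j}(Z_n(\bt_j))\phi(a/\mu^n)^{Z_n(\bt_j)-i_j}$; and setting $I=\sum_j i_j$, the $\phi(a/\mu^n)$ powers collect into $\phi(a/\mu^n)^{Z_n(\bt)-I}$. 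Two classical identities then close the computation: Lemma~\ref{lem:formule-a} (applied on the subset $A=\{j:\ell_j\ge 1\}$ with $s_j=i_j$ and $w=I$) collapses $\sum_{(\ell_j)_{j\in A}\in S_{|A|,p}}\prod_{j\in A}a_{i_j}^{(\ell_j)}(n)$ into the single coefficient $a_I^{(p)}(n)$, while Vandermonde's convolution $\sum_{(i_j)\in\N^k,\,\sum_j i_j=I}\prod_{j}\binom{Z_n(\bt_j)}{i_j}=\binom{Z_n(\bt)}{I}$ turns $\prod_j H_{i_j}(Z_n(\bt_j))$ into $H_I(Z_n(\bt))$. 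What falls out is exactly $M_{n,n_0}^{(p)}(\bt)$, closing the induction. The main obstacle is precisely this reassembly: treating the type-$0$ children on equal footing with the nonzero-type ones so that Vandermonde applies over the full set of positions $\{1,\dots,k\}$, and coordinating Lemma~\ref{lem:formule-a} (acting on the $a$-indices $(\ell_j)$) with Vandermonde (acting on the $H$-indices $(Z_n(\bt_j))$) so as to produce the clean pair $(a_I^{(p)}(n),H_I(Z_n(\bt)))$ from the nested product over $j$.
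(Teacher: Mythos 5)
Your proof is correct and follows essentially the same route as the paper's: reduction to cylinder events, decomposition at the root into the subtrees above the first generation, and the same two convolution identities — your Vandermonde convolution for $\prod_j\binom{z_n(\bt_j)}{i_j}$ is exactly the paper's Lemma~\ref{lem:somme-Hk} restated via $H_i(x)=\binom{x}{i}$ with the zero indices kept in, and the collapse of the $a$-coefficients is Lemma~\ref{lem:formule-a}. The only organizational difference is that you run a single induction on the height with the statement quantified over all root types, whereas the paper uses a double induction (on $p$, then on $n$) to handle the case $i=1$ where one child inherits the full type $p$; both work for the same reason, namely that every subtree has strictly smaller height.
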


\begin{proof}
To prove the theorem, it suffices to prove that,
\begin{equation}\label{eq:recurrence}
\forall n\ge n_0,\ \forall \bt\in\T_{n_0}^n,\ \mathbf{Q}_{n_0}^{(p)}(r_n(\tau)=\bt)=\Q_{n_0}^{(p)}(r_n(\tau)=\bt)
\end{equation}
We prove this formula by induction on $p$.

\medskip
For $p=0$, we have, for every $n>n_0$ (the case $n=n_0$ is trivial as the tree $r_n(\tau)$ is reduced to the root),
\begin{align*}
\mathbf{Q}_{n_0}^{(0)}(r_n(\tau)=\bt) & =\prod_{r=n_0}^{n-1}\prod _{\{u\in\bt, |u|=r\}}q_{k_u(\bt)}^0(r) =\prod_{r=n_0}^{n-1}\prod _{\{u\in\bt, |u|=r\}} q_{k_u(\bt)}\frac{\phi(a/\mu^{r+1})^{k_u(\bt)}}{\phi(a/\mu^r)}\\
& =\left(\prod_{r=n_0}^{n-1}\frac{\phi(a/\mu^{r+1})^{z_{r+1}(\bt)}}{\phi(a/\mu^r)^{z_r(\bt)}}\right)\P_{n_0}(r_{n}(\tau)=\bt)\\
& =\frac{\phi(a/\mu^n)^{z_n(\bt)}}{\phi(a/\mu^{n_0})}\P_{n_0}(r_{n}(\tau)=\bt) =\Q_{n_0}^{(0)}(r_{n}(\tau)=\bt)
\end{align*}
since $z_{n_0}(\bt)=1$.

Let us now suppose that \eqref{eq:recurrence} is true for every $p'<p$. We prove that the property is true at rank $p$ by induction on $n$.

We have already mentioned that the formula is trivially true for $n=n_0$.

Let us now fix $n>n_0$ and let us suppose that the formula is true at rank $p$ for every $n'<n$ and let us prove it for $n$. Let $\bt\in \T_{n_0}^{(n)}$ and let us denote by $k_0$ the number of offspring of the root of $\bt$. We denote by $\bt_1,\ldots,\bt_{k_0}$ the (ordered) sub-trees of $\bt$ above the first generation. By decomposing according to the offspring of the root, we have
\begin{multline*}
\mathbf{Q}_{n_0}^{(p)}(r_n(\tau)=\bt) 
= \frac{p!}{\mu^p}q_{k_0}\sum_{i=1}^{k_0\wedge p}\binom{k_0}{i}\frac{\phi(a/\mu^{n_0+1})^{k_0-i}}{\phi^{(p)}(a/\mu^{n_0})}\sum_{(\ell_1,\ldots,\ell_i)\in S_{i,p}}\left(\prod_{j=1}^i\frac{\phi^{(\ell_j)}(a/\mu^{n_0+1})}{\ell_j!}\right)\\
\times \frac{1}{\binom{k_0}{i}}\sum_{1\le r_1<\cdots<r_i\le k_0}\left(\prod_{j=1}^i\mathbf{Q}_{n_0+1}^{(\ell_j)}(r_n(\tau)=\bt_{r_j})\right)
\times\left(\prod_{\substack{1\le k\le k_0\\ k\not\in\{r_1,\ldots,r_i\}}}\mathbf{Q}_{n_0+1}^{(0)}(r_n(\tau)=\bt_k)\right).
\end{multline*}

Therefore, as
\[
\P_{n_0}(r_n(\tau)=\bt)=q_{k_0}\prod_{j=1}^{k_0}\P_{n_0+1}(r_{n}(\tau)=\bt_j),
\]
we have
\begin{align*}
\frac{\mu^{p(n-n_0)}\mathbf{Q}_{n_0}^{(p)}(r_n(\tau)=\bt)}{\P_{n_0}(r_n(\tau=t))}& = p!\sum_{i=1}^{k_0\wedge p}\frac{\phi(a/\mu^{n_0+1})^{k_0-i}}{\phi^{(p)}(a/\mu^{n_0})}\sum_{(\ell_1,\ldots,\ell_i)\in S_{i,p}}\left(\prod_{j=1}^i\frac{\phi^{(\ell_j)}(a/\mu^{n_0+1})}{\ell_j!}\right)\\
&\qquad\qquad\times \sum_{1\le r_1<\cdots<r_i\le k_0}\left(\prod_{j=1}^i\frac{\mathbf{Q}_{n_0+1}^{(\ell_j)}(r_n(\tau)=\bt_{r_j})}{\P_{n_0+1}(r_n(\tau)=\bt_{\ell_j})}\mu^{\ell_j(n-n_0-1)}\right)\\
&\qquad\qquad\qquad\times\left(\prod_{\substack{1\le k\le k_0\\ k\not\in\{r_1,\ldots,r_i\}}}\frac{\mathbf{Q}_{n_0+1}^{(0)}(r_n(\tau)=\bt_k)}{\P_{n_0+1}(r_n(\tau)=\bt_k)}\right)\\
&\qquad = p!\sum_{i=1}^{k_0\wedge p}\frac{\phi(a/\mu^{n_0+1})^{k_0-i}}{\phi^{(p)}(a/\mu^{n_0})}\sum_{(\ell_1,\ldots,\ell_i)\in S_{i,p}}\left(\prod_{j=1}^i\frac{\phi^{(\ell_j)}(a/\mu^{n_0+1})}{\ell_j!}\right)\\
&\qquad\qquad\times \sum_{1\le r_1<\cdots<r_i\le k_0}\left(\prod_{j=1}^i\frac{\Q_{n_0+1}^{(\ell_j)}(r_n(\tau)=\bt_{r_j})}{\P_{n_0+1}(r_n(\tau)=\bt_{\ell_j})}\mu^{\ell_j(n-n_0-1)}\right)\\
&\qquad\qquad\qquad\times\left(\prod_{\substack{1\le k\le k_0\\ k\not\in\{r_1,\ldots,r_i\}}}\frac{\Q_{n_0+1}^{(0)}(r_n(\tau)=\bt_k)}{\P_{n_0+1}(r_n(\tau)=\bt_k)}\right).
\end{align*}
by the induction assumption on $p$ for $\ell_j<p$ (i.e. $i\ne 1$) and the induction assumption on $n$ for $\ell_j=p$ (i.e. $i=1$). By the definition of the measure $\Q^{(k)}_{n_0}$, we have
\begin{align*}
&\frac{\mu^{p(n-n_0)}\mathbf{Q}_{n_0}^{(p)}(r_n(\tau)=\bt)}{\P_{n_0}(r_n(\tau)=t)}  = p!\sum_{i=1}^{k_0\wedge p}\frac{\phi(a/\mu^{n_0+1})^{k_0-i}}{\phi^{(p)}(a/\mu^{n_0})}\sum_{(\ell_1,\ldots,\ell_i)\in S_{i,p}}\left(\prod_{j=1}^i\frac{\phi^{(\ell_j)}(a/\mu^{n_0+1})}{\ell_j!}\right)\\
&\times \sum_{1\le r_1<\cdots<r_i\le k_0}\left(\prod_{j=1}^i\frac{\ell_j!}{\phi^{(\ell_j)}(a/\mu^{n_0+1})}\sum_{s=1}^{\ell_j}a_s^{(\ell_j)}(n)\phi(a/\mu^n)^{z_{n}(\bt_{r_j})-s}H_s(z_n(\bt_{r_j}))\right)\\
& \qquad \qquad \times\left(\prod_{\substack{1\le k\le k_0\\ k\not\in\{r_1,\ldots,r_i\}}}\frac{\phi(a/\mu^n)^{z_n(\bt_k)}}{\phi(a/\mu^{n_0+1})}\right)\\
& \qquad =\frac{p!}{\phi^{(p)}(a/\mu^{n_0})}\sum_{i=1}^{k_0\wedge p}\sum_{(\ell_1,\ldots,\ell_i)\in S_{i,p}}\sum_{1\le r_1<\cdots<r_i\le k_0}\phi(a/\mu^n)^{z_n(\bt)}\\
& \qquad\qquad\times \prod_{j=1}^i\sum_{s=1}^{\ell_j}a_s^{(\ell_j)}(n)\phi(a/\mu^n)^{-s}H_s(z_n(\bt_{r_j}))\\
\intertext{using that $\displaystyle \sum_{k=1}^{k_0}z_n(\bt_k)=z_n(\bt)$}\\
& \qquad = \frac{p!}{\phi^{(p)}(a/\mu^{n_0})}\sum_{i=1}^{k_0\wedge p}\sum_{(\ell_1,\ldots,\ell_i)\in S_{i,p}}\sum_{1\le r_1<\cdots<r_i\le k_0}\phi(a/\mu^n)^{z_n(\bt)}\\
& \qquad\qquad \times \sum_{w=i}^p\sum_{(s_1,\ldots,s_i)\in S_{i,w}^+}\prod_{j=1}^ia_{s_j}^{(\ell_j)}(n)\phi(a/\mu^n)^{-s_j}H_{s_j}(z_n(\bt_{r_j}))\\
& \qquad = \frac{p!}{\phi^{(p)}(a/\mu^{n_0})}\sum_{w=1}^p\sum_{i=1}^{k_0\wedge w}\sum_{1\le r_1<\cdots<r_i\le k_0}\sum_{(s_1,\ldots,s_i)\in S_{i,w}}\phi(a/\mu^n)^{z_n(\bt)-w}\\
& \qquad\qquad\times \left(\sum_{(\ell_1,\ldots,\ell_i)\in S_{i,p}}\prod_{j=1}^ia_{s_j}^{(\ell_j)}(n)\right)\prod_{j=1}^iH_{s_j}(z_n(\bt_{r_j})).
\end{align*}

Using successively Lemma \ref{lem:formule-a} and Lemma \ref{lem:somme-Hk} gives
\begin{align*}
& \frac{\mu^{p(n-n_0)}\mathbf{Q}_{n_0}^{(p)}(r_n(\tau)=\bt)}{\P_{n_0}(r_n(\tau=t))}\\
& \qquad = \frac{p!}{\phi^{(p)}(a/\mu^{n_0})}\sum_{w=1}^pa_w^{(p)}(n)\phi(a/\mu^n)^{z_n(\bt)-w}\\
& \qquad\qquad \times \left(\sum_{i=1}^{k_0\wedge w}\sum_{1\le r_1<\cdots<r_i\le k_0}\sum_{(s_1,\ldots,s_i)\in S_{i,w}^+}\prod_{j=1}^iH_{s_j}(z_n(\bt_{r_j}))\right)\\
& \qquad =\frac{p!}{\phi^{(p)}(a/\mu^{n_0})}\sum_{w=1}^pa_w^{(p)}(n)\phi(a/\mu^n)^{z_n(\bt)-w}H_w(z_n(\bt))
\end{align*}
which ends the induction.
\end{proof}

\section{The sub-critical case}

%
%
%
%
%
%
%

In this section, we consider a sub-critical offspring distribution $q$ and we assume that there exists $\kappa>1$ such that $f(\kappa)=\kappa$ and $f'(\kappa)<+\infty$ (this implies in particular that $q$ admits moments of any order).

We define $\bar f(t)=f(\kappa t)/\kappa$ for $t\in[0,1]$ and note that $\bar f$ is the generating function of a super-critical offspring distribution $\bar q$ with $\bar q_n=\kappa^{n-1}q_n$. The mean $\bar \mu$ of $\bar q$ is $f'(\kappa)$, the smallest positive fixed point of $\bar f$ is $\bar\kappa=1/\kappa$ and $\bar f'(\bar \kappa)=\mu$. Let $\bar \tau$ be the corresping genealogical tree. It is elementary to check that, for every $n\in\N$ and nonnegative measurable function $\varphi$, we have
\begin{equation}\label{eq:densite}
\E[\varphi(r_n(\bar \tau))]=\E\left[\kappa^{Z_n-1}\varphi(r_n(\tau))\right].
\end{equation}

We deduce from Theorem \ref{thm:s<1-q_0>0}, Theorem \ref{thm:limite_ratio} and \ref{thm:distribution_p-spines} the following result:

\begin{theo}\label{thm:sub-critical}
Let $p\in\N$. Let $q$ be a sub-critical offspring distribution with generating function $f$ and suppose that there exists a unique $\kappa>1$ such that $f(\kappa)=\kappa$ and $f^{(p\vee 1)}(\kappa)<+\infty$. Then for every every $n\in\N$ and every $\Lambda_n\in\mathscr{F}_n$, we have
\begin{itemize}
\item For every $s\in[0,\kappa)$,
\[
\lim_{m\to+\infty}\frac{\E\left[H_p(Z_{m+n})s^{Z_{m+n}}\ind_{\Lambda_n}\right]}{\E\left[H_p(Z_{m+n})s^{Z_{m+n}}\right]}=\begin{cases}
\E\left[\ind_{\Lambda_n}\right] & \mbox{if }p=0,\\
\E\left[\frac{Z_n}{\mu^n}\ind_{\Lambda_n}\right] & \mbox{if }p\ge 1.
\end{cases}
\]
\item For every $a\ge 0$,
\[
\lim_{m\to+\infty}\frac{\E\left[H_p(Z_{m+n})\kappa^{Z_{m+n}}e^{-a\frac{Z_{m+n}}{f'(\kappa)^{m+n}}}\ind_{\Lambda_n}\right]}{\E\left[H_p(Z_{m+n})\kappa^{Z_{m+n}}e^{-a\frac{Z_{m+n}}{f'(\kappa)^{m+n}}}\right]}=\E\left[\frac{1}{f'(\kappa)^{pn}}\kappa^{Z_n-1}\bar G_n^{(p)}(Z_n)\right]:=\E\left[\bar M_n^{(p)}\ind_{\Lambda_n}\right]
\]
where $\bar G_n^{(p)}$ is the function defined by \eqref{eq:def-G} associated with the offspring distribution $\bar q$.

Moreover, the probability measure $\Q_{n_0}^{(p)}$ defined by \eqref{eq:def-Q}
with $M^{(p)}$ replaced by $\bar M^{(p)}$, is the probability measure $\bar{\mathbf{Q}}_{n_0}^{(p)}$ defined in Definition \ref{def:biased-tree} with $q$ replaced by $\bar q$.
\end{itemize}
\end{theo}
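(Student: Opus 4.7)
The plan is to reduce the whole statement to the super-critical case via the Esscher-type change of measure encoded in \eqref{eq:densite}. Let $\bar\P_{n_0}$ be the law under which $\tau$ is a Galton-Watson tree with offspring distribution $\bar q$ rooted at height $n_0$. Since $Z_{n_0}=1$ under both $\P_{n_0}$ and $\bar\P_{n_0}$, the identity \eqref{eq:densite} extends verbatim and yields
\[
\frac{d\bar\P_{n_0}}{d\P_{n_0}}\bigg|_{\mathscr{F}_n}=\kappa^{Z_n-1},\qquad \frac{d\P_{n_0}}{d\bar\P_{n_0}}\bigg|_{\mathscr{F}_n}=\kappa^{1-Z_n}.
\]
The hypotheses guarantee that $\bar q$ is super-critical with $\bar\mu=f'(\kappa)>1$, extinction probability $\bar\kappa=1/\kappa$ and $\bar f'(\bar\kappa)=\mu$. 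Since the sub-critical case $q_0=0$ is degenerate, $\bar q_0=q_0/\kappa>0$, so $\bar q$ sits in the Schroeder case, and the moment hypothesis $f^{(p\vee 1)}(\kappa)<+\infty$ translates into a $p$-th moment for $\bar q$. Theorems \ref{thm:s<1-q_0>0}, \ref{thm:limite_ratio} and \ref{thm:distribution_p-spines} are therefore available under $\bar\P$.

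For the first bullet, applied to any non-negative $\mathscr{F}_n$-measurable $\psi$,
\[
\E\bigl[H_p(Z_{m+n})s^{Z_{m+n}}\psi\bigr]=\bar\E\bigl[\kappa^{1-Z_{m+n}}H_p(Z_{m+n})s^{Z_{m+n}}\psi\bigr]=\kappa\,\bar\E\bigl[H_p(Z_{m+n})(s/\kappa)^{Z_{m+n}}\psi\bigr],
\]
so the prefactor $\kappa$ cancels in the ratio. Since $s\in[0,\kappa)$ forces $s/\kappa\in[0,1)$, Theorem \ref{thm:s<1-q_0>0} applied under $\bar\P$ shows the ratio tends to $\bar\E[\widetilde M_n^{(p)}\ind_{\Lambda_n}]$, with $\widetilde M_n^{(p)}=\bar\kappa^{Z_n-1}=\kappa^{1-Z_n}$ for $p=0$ and $\widetilde M_n^{(p)}=Z_n\kappa^{1-Z_n}/\mu^n$ for $p\ge 1$. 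Undoing the change of measure,
\[
\bar\E[\widetilde M_n^{(p)}\ind_{\Lambda_n}]=\E[\kappa^{Z_n-1}\widetilde M_n^{(p)}\ind_{\Lambda_n}],
\]
and the collapse $\kappa^{Z_n-1}\kappa^{1-Z_n}=1$ yields $\E[\ind_{\Lambda_n}]$ when $p=0$ and $\E[(Z_n/\mu^n)\ind_{\Lambda_n}]$ when $p\ge 1$.

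The second bullet follows the same recipe: the change of measure produces
\[
\E\bigl[H_p(Z_{m+n})\kappa^{Z_{m+n}}e^{-aZ_{m+n}/f'(\kappa)^{m+n}}\psi\bigr]=\kappa\,\bar\E\bigl[H_p(Z_{m+n})e^{-aZ_{m+n}/\bar\mu^{m+n}}\psi\bigr],
\]
matching exactly the penalizing function of Theorem \ref{thm:limite_ratio} under $\bar\P$; that theorem gives a ratio limit equal to $\bar\E[\bar\mu^{-pn}\bar G_n^{(p)}(Z_n)\ind_{\Lambda_n}]$, and pulling this back through $\kappa^{Z_n-1}$ produces exactly $\E[\bar M_n^{(p)}\ind_{\Lambda_n}]$ as stated.

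To identify the penalized tree, I would split the candidate density into two successive changes of measure: writing the natural extension $\bar M_{n,n_0}^{(p)}=\kappa^{Z_n-1}\,\bar\mu^{-p(n-n_0)}\,\bar G_{n,n_0}^{(p)}(Z_n)$, the factor $\kappa^{Z_n-1}$ is the density of $\bar\P_{n_0}$ with respect to $\P_{n_0}$ on $\mathscr{F}_n$, while the remaining factor is, by Theorem \ref{thm:distribution_p-spines} applied to $\bar q$, the density of $\bar{\mathbf{Q}}_{n_0}^{(p)}$ with respect to $\bar\P_{n_0}$. Composing the two gives $\Q_{n_0}^{(p)}=\bar{\mathbf{Q}}_{n_0}^{(p)}$. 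Since the entire argument amounts to applying the known super-critical results after one change of measure, the only genuine subtlety is the bookkeeping of the exponents of $\kappa$, $\mu$ and $\bar\mu=f'(\kappa)$; once the identities $\bar\kappa=1/\kappa$ and $\bar f'(\bar\kappa)=\mu$ are brought in, there is no further obstacle.
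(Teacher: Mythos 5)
Your proposal is correct and follows exactly the paper's route: transfer everything to the super-critical law $\bar\P$ via the density $\kappa^{Z_n-1}$ from \eqref{eq:densite}, invoke Theorems \ref{thm:s<1-q_0>0}, \ref{thm:limite_ratio} and \ref{thm:distribution_p-spines} for $\bar q$, and pull back, using $\bar\kappa=1/\kappa$ and $\bar f'(\bar\kappa)=\mu$. The only difference is that the paper carries out just the first bullet for $p\ge 1$ and leaves the remaining cases (including the composition of the two changes of measure identifying $\Q_{n_0}^{(p)}$ with $\bar{\mathbf{Q}}_{n_0}^{(p)}$) to the reader, whereas you write them all out.
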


\begin{proof}
We only prove one case, the other ones can be handled in the same way.

Let us consider $p\ge 1$ and $s\in[0,\kappa)$. Using Equation \eqref{eq:densite} then Theorem \ref{thm:s<1-q_0>0} (remark that, as $q$ is sub-critical, $q_0>0$), and then Equation \eqref{eq:densite} again, we have
\begin{align*}
\lim_{m\to+\infty}\frac{\E\left[H_p(Z_{m+n})s^{Z_{m+n}}\ind_{\Lambda_n}\right]}{\E\left[H_p(Z_{m+n})s^{Z_{m+n}}\right]} & =\lim_{m\to+\infty}\frac{\E\left[H_p(\bar Z_{m+n})(s/\kappa)^{\bar Z_{m+n}}\ind_{\Lambda_n}\right]}{\E\left[H_p(\bar Z_{m+n})(s/\kappa)^{\bar Z_{m+n}}\right]}\\
& =\E\left[\frac{\bar Z_n\bar \kappa^{\bar Z_n-1}}{\bar f'(\bar \kappa)^n}\ind_{\Lambda_n}\right]=\E\left[\frac{\bar Z_n}{\kappa^{\bar Z_n-1}\mu^n}\ind_{\Lambda_n}\right] =\E\left[\frac{Z_n}{\mu^n}\ind_{\Lambda_n}\right].
\end{align*}
\end{proof}

\section{Appendix: A technical lemma on the Hilbert polynomials}

\begin{lem}\label{lem:somme-Hk}
For every $w\ge 1$, for every $k\ge 2$ and every integers $(t_1,\ldots,t_i)$, we have
\[H_w\left(\sum_{j=1}^k t_{j}\right)=\sum_{i=1}^{w\wedge k}\sum_{1\le r_1<\dots<r_i\le k}\sum_{(s_1,\ldots,s_i)\in S_{i,w}}\prod_{j=1}^iH_{s_j}(t_{r_j})\]
\end{lem}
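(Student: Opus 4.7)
The plan is to recognize this identity as a grouped form of the Vandermonde--Chu convolution for binomial coefficients. Since $H_w(x)=\binom{x}{w}$, the right hand side is really a restatement of how $\binom{t_1+\cdots+t_k}{w}$ decomposes.

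First, I recall (or prove by an easy induction on $k$, using the two-variable Vandermonde identity $\binom{a+b}{w}=\sum_{j=0}^w\binom{a}{j}\binom{b}{w-j}$) the multivariable identity
\[
H_w\!\left(\sum_{j=1}^k t_j\right) \;=\; \binom{t_1+\cdots+t_k}{w} \;=\; \sum_{\substack{(s_1,\ldots,s_k)\in\N^k \\ s_1+\cdots+s_k=w}}\;\prod_{j=1}^k \binom{t_j}{s_j} \;=\; \sum_{\substack{(s_1,\ldots,s_k)\in\N^k \\ s_1+\cdots+s_k=w}}\;\prod_{j=1}^k H_{s_j}(t_j).
\]
This is the natural starting point because the right hand side of the lemma is clearly a partition of the same sum.

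The second step is to group the terms of this sum according to the support $I=\{j\in\{1,\ldots,k\}:s_j>0\}$. Writing $i=|I|$ and $I=\{r_1<\cdots<r_i\}$, note that since each nonzero $s_j$ is at least $1$ and these nonzero values sum to $w$, we must have $1\le i\le w\wedge k$ (the case $i=0$ would force $w=0$, which is excluded). For fixed $I$, the nonzero coordinates $(s_{r_1},\ldots,s_{r_i})$ range precisely over $S_{i,w}$, while the factors $H_0(t_j)=1$ for $j\notin I$ contribute nothing. Substituting this grouping into the Vandermonde identity above yields exactly
\[
H_w\!\left(\sum_{j=1}^k t_j\right)=\sum_{i=1}^{w\wedge k}\;\sum_{1\le r_1<\cdots<r_i\le k}\;\sum_{(s_1,\ldots,s_i)\in S_{i,w}}\;\prod_{j=1}^{i} H_{s_j}(t_{r_j}),
\]
which is the claim.

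There is no real obstacle here: the only thing to be careful about is the indexing after the regrouping (that the inner condition $s_j\ge 1$ in the definition of $S_{i,w}$ matches precisely the ``positive coordinates'' in the support), and the range of $i$. Everything else is formal.
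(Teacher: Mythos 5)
Your proof is correct, and it organizes the argument differently from the paper. The paper proves the grouped identity directly by induction on $k$: the base case $k=2$ is the two-variable Vandermonde convolution $\binom{t_1+t_2}{w}=\sum_{s}\binom{t_1}{s}\binom{t_2}{w-s}$, and the inductive step splits $\sum_{j=1}^{k+1}t_j$ as $\bigl(\sum_{j=1}^{k}t_j\bigr)+t_{k+1}$, applies the $k=2$ case, and then performs a somewhat delicate re-indexing (shifting $s_{i+1}=w-s$ and $i'=i+1$) to recover the grouped form. You instead first establish the \emph{ungrouped} multivariable Chu--Vandermonde identity $\binom{t_1+\cdots+t_k}{w}=\sum_{s_1+\cdots+s_k=w,\ s_j\ge 0}\prod_j\binom{t_j}{s_j}$ (whose induction on $k$ is essentially trivial, with no re-indexing), and then obtain the lemma by a single transparent bijection: group the compositions by their support $I=\{j:s_j>0\}$, note $H_0\equiv 1$ kills the zero coordinates, and check that $1\le |I|\le w\wedge k$ because $w\ge 1$ forces $I\ne\emptyset$ and each positive part is at least $1$. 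The two proofs rest on the same engine (the two-variable Vandermonde identity), but your separation of the analytic step from the combinatorial regrouping pushes all the bookkeeping into one clean bijection, whereas the paper's version carries the grouping through every stage of the induction. Both are complete; yours is arguably easier to verify.
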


\begin{proof}
We prove this formula by induction on $k$.

First, for $k=2$, the right-hand side of the equation is, for every $w\ge 2$ (the formula is obvious for $w=1$),
\begin{align*}
\sum_{i=1}^{2}\sum_{1\leq r_1<\dots<r_i\leq 2} \sum_{(s_1,\ldots,s_i)\in S_{i,w}}\prod_{j=1}^iH_{s_j}(t_{r_j})
 & =H_w(t_1)+H_w(t_2)+\sum_{s_1=1}^{w-1}H_{s_1}(t_1)H_{w-s_1}(t_2)\\
& =\binom{t_1}{w}+\binom{t_2}{w}+\sum_{s_1=1}^{w-1}\binom{t_1}{s_1}\binom{t_2}{w-s_1}\\
& =\sum_{s_1=0}^{w}\binom{t_1}{s_1}\binom{t_2}{w-s_1}=\binom{t_1+t_2}{w}=H_w(t_1+t_2).
\end{align*}

Assume now that the formula of the lemma is true for every $2\le k$ and let us prove it for $k+1$. We have, using first the formula for $k=2$,
\begin{align*}
H_w\left(\sum_{j=1}^{k+1}t_{j}\right)&=H_w\left(\sum_{j=1}^kt_{j}+t_{k+1}\right)\\
&=H_w\left(\sum_{j=1}^kt_j\right)+H_w(t_{k+1})+\sum_{s=1}^{w-1}H_{s}\left(\sum_{j=1}^kt_j\right)H_{w-s}(t_{k+1})\\
&=\sum_{i=1}^{w\wedge k}\sum_{1\leq r_1<\dots<r_i\leq k}\sum_{(s_1,\ldots,s_i)\in S_{i,w}}\prod_{j=1}^iH_{s_j}(t_{r_j})+\\
& \qquad H_w(t_{k+1})+\sum_{s=1}^{w-1}\sum_{i=1}^{s\wedge k}\sum_{1\leq r_1<\dots< r_i\leq k}\sum_{(s_1,\ldots,s_i)\in S_{i,s}}\left(\prod_{j=1}^iH_{s_j}(t_{r_j})\right)H_{w-s}(t_{k+1})
\end{align*}
by the induction assumption. Inverting the sums in the last term and setting $s_{i+1}=w-s$ than $i'=i+1$ yields
\begin{align*}
H_w\left(\sum_{j=1}^{k+1}t_{j}\right)& =\sum_{i=1}^{w\wedge k}\sum_{1\leq r_1<\dots< r_i\leq k}\sum_{(s_1,\ldots,s_i)\in S_{i,w}}\prod_{j=1}^iH_{s_j}(t_{r_j})+\\
& \qquad H_w(t_{k+1})+\sum_{i=1}^{(w-1)\wedge k}\sum_{1\leq r_1<\dots<r_i\leq k}\sum _{s=i}^{w-1}\sum_{(s_1,\ldots,s_i)\in S_{i,s}}\left(\prod_{j=1}^iH_{s_j}(t_{r_j})\right)H_{w-s}(t_{k+1})\\
& =\sum_{i=1}^{w\wedge k}\sum_{1\leq r_1<\dots<r_i\leq k}\sum_{(s_1,\ldots,s_i)\in S_{i,w}}\prod_{r=1}^iH_{s_j}(t_{r_j})+\\
& \qquad H_w(t_{k+1})+\sum_{i'=2}^{w\wedge (k+1)}\sum_{1\leq r_1<\dots<r_{i'-1}\leq k}\sum_{(s_1,\ldots,s_{i'})\in S_{i',w}}\left(\prod_{j=1}^{i'-1}H_{s_j}(t_{r_j})\right)H_{s_{i'}}(t_{k+1})\\
&=\sum_{i=1}^{w\wedge (k+1)}\sum_{1\leq r_1<\dots<r_i\leq k+1}\sum_{(s_1,\ldots,s_i)\in S_{i,w}}\prod_{j=1}^iH_{s_j}(t_{r_j})
\end{align*}
which is the looked after formula.

\end{proof}

\section*{Acknowledgements}

The authors want to thank Luc Hillairet for several helpful discussions and his indications to obtain Proposition \ref{prop:unicite} or Lemma \ref{lem:formule-a}.

\bibliographystyle{abbrv}
\bibliography{biblio}

\end{document}